\let\SavedRightarrow=\Rightarrow
\let\Rightarrow=\SavedRightarrow
\newcommand{\Ia }{\mathbb I}
\newcommand{\Aa }{\mathbb A}
\newcommand{\Ma }{\mathbb M}
\newcommand{\Ga }{\mathbb G}
\renewcommand{\int}{\operatorname{Int}}
\newtheorem{thm}{Theorem}[section]
\newtheorem{pro}[thm]{Proposition}
\newtheorem{lem}[thm]{Lemma}
\title{The  monoid  consisting of Kuratowski operations}
\subjclass[2000]{Primary: 54A05; Secondary: 20M20, 54H15.}
\keywords{Closure, Complement, Kuratowski operation, Monoid, Semigroup}
\author{Szymon Plewik}
\address{Institute of Mathematics, University of Silesia, ul. Bankowa 14, 40-007 Katowice}
\email{plewik@math.us.edu.pl}
\author{Marta Walczy\'nska}
\address{Institute of Mathematics, University of Silesia, ul. Bankowa 14, 40-007 Katowice}
\email{mwalczynska@us.edu.pl}
\begin{document}

\maketitle

\begin{abstract} The paper  fills  gaps in knowledge about Kuratowski operations  which are already in the literature.
The Cayley table for these operations has been drawn up. Techniques, using only paper and pencil,   to point out all  semigroups and its isomorphic types are applied.  Some  results apply only to  topology, one can not bring them out, using only  properties of  the complement  and a closure-like operation. The arguments are by systematic study of possibilities.
\end{abstract}

\section{Introduction} 
Let $X$ be a topological space. Denote by $A^-$  closure of the set $A \subseteq X$. Let $A^c$ be the complement of $A$, i.e. $X \setminus A =A^c$. The aim of this note is to examine monoids generated under compositions from the closure and the complement. A widely known fact due to K. Kuratowski \cite{1922} states that at most 14 distinct operations can be formed such compositions.  
  Mark them as  follows.
\[
\begin{tabular}{ll} \textit{Kuratowski operations}:\\
$\sigma_0(A) = A $ (\textit{the identity}),  &   $\sigma_1(A)=A^{c}$ (\textit{the complement}), \\ 

$\sigma_2(A) =   A^{-} $ (\textit{the closure}), & $\sigma_3(A)=A^{c-}$, \\ 

$\sigma_4(A) = A^{-c}$, & $ \sigma_5(A)=A^{c-c}$ (\textit{the interior}), \\ 

$\sigma_6(A) = A^{-c-}$,& $ \sigma_7(A)=A^{c-c-}$,\\ 

$\sigma_8(A) =A^{-c-c}$, & $ \sigma_9(A)=A^{c-c-c}$, \\ 

$\sigma_{10}(A) =A^{-c-c-}$, &$ \sigma_{11}(A)=A^{c-c-c-}$, \\ 

$\sigma_{12}(A) =A^{-c-c-c}$,&$\sigma_{13}(A)=A^{c-c-c-c}$. \\ 
&\\
\textit{Cancellation rules}:\\ $ A^{-c-}  =A^{-c-c-c-}$, & $A^{c-c-}=A^{c-c-c-c-}$.  
\end{tabular}
\] 

Kuratowski operations have been studied by several authors, for example \cite{chap} or \cite{sh}. A list of some other  authors one can find in the  paper \cite{gj} by B. J. Gardner and M. Jackson. For the first time these operations  were systematically studied in the dissertation by K. Kuratowski, whose results were published in \cite{1922}. Tasks relating to these operations are usually resolved at  lectures or  exercises  with General Topology. They are normally left to students for independent resolution. For example, determine how many different ways they convert a given  set.

This note is organized as follows. Kuratowski operations and their marking are described in the introduction. Their properties of a much broader context than  for  topologies are presented in Part 2. The Cayley table, for the monoid $\Ma$ of all Kuratowski operations,  has been drawn up in Part 3. We hope that  this table has not yet been published in the literature.  Having this table, one can create a computer program that calculates all the semigroups contained in $\Ma$.  However, in parts 4 - 8, we present a framework (i.e.,  techniques using only paper and pencil)  to point out all 118 semigroups and 56 isomorphic types of them. 
The list of 43 semigroups which are not monoids is presented in Part 9. In this part,  also are discussed isomorphic types in order of the  number of elements in  semigroups. Finally, we  present cancellation rules (relations)  motivated by some  topological spaces.

\section{Cancellation rules} 
A map $f: P(X) \to  P(X)$ is called:
\begin{itemize}
	\item  \textit{increasing}, if $A\subseteq B$ implies  $f(A)\subseteq f(B)$;
\item \textit{decreasing}, if  $A\subseteq B$ implies  $f(B)\subseteq f(A)$;
\item \textit{an involution}, if the composition  $f \circ f$ is the identity; 
\item \textit{an idempotent}, if  $f \circ f =f$.\end{itemize}
Assume that $A \mapsto\sigma_0(A) $ is the identity,  $A \mapsto \sigma_1(A)$ is a decreasing involution and $A\mapsto \sigma_2(A)$ is an increasing idempotent map. Other operations $\sigma_i$ let be compositions of $\sigma_1$ and $\sigma_2$ as in the above table.  We get the following cancellation rules:

\begin{lem}\label{11}  If   $B \subseteq \sigma_2(B)$, then $$\sigma_2 \circ \sigma_{12} = \sigma_6 \mbox{ and } \sigma_2 \circ \sigma_{13} = \sigma_7 .$$  
\end{lem}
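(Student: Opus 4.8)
The plan is to abbreviate $c=\sigma_1$, $k=\sigma_2$ and to introduce the interior operator $i=\sigma_1\circ\sigma_2\circ\sigma_1=\sigma_5$; throughout I write $f\le g$ to mean $f(A)\subseteq g(A)$ for every $A$. The hypothesis is precisely $\sigma_0\le k$ (extensivity of $k$), while by assumption $c\circ c=\sigma_0$, $k\circ k=k$, with $k$ increasing and $c$ decreasing. Unwinding the table, $\sigma_2\circ\sigma_{13}=kckckckc=(kckc)\circ(kckc)$ and $\sigma_7=kckc=k\circ i$, so the second asserted identity is exactly the idempotency $(k\circ i)\circ(k\circ i)=k\circ i$. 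Moreover the first identity follows from the second by composing on the right with $c$ and cancelling $c\circ c=\sigma_0$: from $kckckckc=kckc$ one reads off $kckckck=kck$, that is $\sigma_2\circ\sigma_{12}=\sigma_6$. Hence everything reduces to showing that $k\circ i$ is idempotent.

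First I would record the elementary properties of $i$. Being the composite $c\circ k\circ c$ of a decreasing, an increasing and a decreasing map, $i$ is increasing. It is idempotent, since $i\circ i=c\circ k\circ(c\circ c)\circ k\circ c=c\circ(k\circ k)\circ c=c\circ k\circ c=i$, using $c\circ c=\sigma_0$ and $k\circ k=k$. It is anti-extensive, $i\le\sigma_0$: applying the hypothesis to $c(A)$ gives $c(A)\subseteq k(c(A))$, and applying the decreasing map $c$ reverses this to $c(k(c(A)))\subseteq c(c(A))=A$, i.e. $i(A)\subseteq A$. Finally I would note two one-sided bounds: evaluating the hypothesis at $i(A)$ gives $i\le k\circ i$, and applying the increasing map $i$ to $\sigma_0\le k$ gives $i\le i\circ k$.

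The core is then two inclusions. For the upper bound, $i\le\sigma_0$ gives $i\circ(k\circ i)\le k\circ i$, and applying the increasing $k$ together with $k\circ k=k$ gives $k\circ i\circ k\circ i\le k\circ k\circ i=k\circ i$. For the lower bound, applying $k$ to $i\le i\circ k$ gives $k\circ i\le k\circ i\circ k$, and then composing on the right with $i$, using $i\circ i=i$, gives $k\circ i=k\circ i\circ i\le k\circ i\circ k\circ i$. Together the two inclusions yield $(k\circ i)\circ(k\circ i)=k\circ i$, which is $\sigma_2\circ\sigma_{13}=\sigma_7$, and the right-multiplication by $c$ described above then delivers $\sigma_2\circ\sigma_{12}=\sigma_6$. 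The step I expect to be the true obstacle is the lower bound: one has to spot that the usable inequality is $i\le i\circ k$ (not $\sigma_0\le i\circ k$, which is false), push it through $k$, and then reabsorb a factor $i$ by idempotency; the upper bound and the algebraic identities are routine. It is worth stressing that the anti-extensivity of $i$ — and hence the whole lemma — rests on the hypothesis $B\subseteq\sigma_2(B)$, which is exactly where that assumption is consumed.
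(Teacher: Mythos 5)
Your proof is correct. It establishes the same two inclusions as the paper, but with a different organization: you isolate the interior operator $i=\sigma_1\circ\sigma_2\circ\sigma_1=\sigma_5$, check that it is increasing, idempotent and anti-extensive (the last point being exactly where the hypothesis $B\subseteq\sigma_2(B)$ is consumed), and reduce both cancellation rules to the single assertion that $\sigma_2\circ\sigma_5$ (closure of the interior) is idempotent, recovering $\sigma_2\circ\sigma_{12}=\sigma_6$ from $\sigma_2\circ\sigma_{13}=\sigma_7$ by right-composition with $\sigma_1$ and the cancellation $\sigma_1\circ\sigma_1=\sigma_0$. The paper travels the same road in the opposite direction: it proves $A^{-c-c-c-}=A^{-c-}$ first, by two direct chains of inclusions on the superscript strings (one direction from $A^{-c-c}\subseteq A^{-c-c-}$, the other from $A^{-c-c}\subseteq A^{-}$), and then obtains the second rule by the substitution $A\mapsto A^c$. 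Your intermediate steps are cognate to the paper's under that substitution but not identical; for instance your lower bound passes through $i\le i\circ\sigma_2$ (interior contained in interior of the closure), whereas the paper's passes through $A^{-c-c}\subseteq A^{-}$ (interior of the closure contained in the closure). What your packaging buys is conceptual clarity --- the lemma is exhibited as precisely the idempotency of the ``closure of interior'' operator, the standard kernel of the $14$-set theorem --- at the cost of a slightly longer setup; the paper's string computation is terser and matches the textbook hint it discusses immediately afterwards. Both arguments use only that $\sigma_1$ is a decreasing involution and $\sigma_2$ an increasing, extensive idempotent, so, like the paper's, your proof needs none of the topology axioms.
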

\begin{proof} For clarity of this proof,  use  designations $\sigma_1(A)=A^c$ and $\sigma_2(A)=A^-$. Thus, we shall to prove  $$ A^{-c-c-c-}=A^{-c-} \mbox{  and }  A^{c-c-c-c-}= A^{c-c-}.$$ We start with $A^{-c-c} \subseteq A^{-c-c-}$, substituting $B=A^{-c-c}$ in $B \subseteq B^-$.  This corresponds to $A^{-c-c-c} \subseteq A^{-c-cc}=A^{-c-}$, since $\sigma_1$ is a decreasing involution. Hence $A^{-c-c-c-} \subseteq A^{-c-}$, since $\sigma_2$  an increasing  idempotent.

Since $\sigma_1$ is decreasing, $\sigma_2$ is increasing and   $B \subseteq B^-$ we have  $$B^{-c} \subseteq B^{c}\subseteq B^{c-}.$$   Thus   $A^{-c-c} \subseteq A^{-cc-}=A^-$, if we  put  $A^{-c} = B$. Again using that   $\sigma_2$ is increasing and $\sigma_1$ is decreasing,  we obtain  $ A^{-c-c-} \subseteq A^{-}$ and  then $A^{-c}\subseteq A^{-c-c-c}  $. Finally,  we  get   $A^{-c-}\subseteq A^{-c-c-c-} $.  

  With $A^c$ in the place of  $A$ in the rule  $A^{-c-} = A^{-c-c-c-}$ we get the second rule.
\end{proof}

 In academic textbooks of general topology, for example \cite[Problem 1.7.1.]{eng}, one can  find a hint suggested to prove above cancellation rules.   Students go like this: Steps $A \subseteq A^-$ and  $A^c \subseteq A^{c-} $ lead to $A^{c-c} \subseteq A$;    The special case $  A^{-c-c-c} \subseteq A^{-c-}$ (of  $A^{c-c} \subseteq A$) leads to $  A^{-c-c-c-} \subseteq A^{-c-}$;  Steps $A^{-c-c}  \subseteq A^-$ and  $A^{-c-c-} \subseteq A^{-} $ lead to $A^{-c-} \subseteq A^{-c-c-c-}$; Use the last step of the proof of Lemma \ref{11} at the end. Note that, above proofs does not use  axioms of topology: 
\begin{itemize}
	\item 
 $\emptyset = \emptyset^-$; \item $(A \cup B)^-= A^- \cup B^-$.
 \end{itemize}
 In the literature there are articles in which Kuratowski operations are replaced by some other  mappings. For example, W. Koenen \cite{ko} considered linear spaces and put $\sigma_2(A)$ to be the convex hull of $A$. In fact, S. Shum \cite{shu} considered   $\sigma_2$ as  the closure  due to the algebraic operations.  Add to this, that these operations can be applied to so called Fr\'echet (V)spaces, which were considered in the book \cite[p. 3 - 37]{sie}.

\section{The monoid $\mathbb M$} 

Let $\mathbb M$ be the monoid consisting  of all Kuratowski operations, i.e. there are assumed cancellation rules: $\sigma_6 = \sigma_1 \circ \sigma_{12}$ and $\sigma_7 = \sigma_1 \circ \sigma_{13}$. 
Fill in the Cayley table for $\mathbb M$, where the row and column marked by the identity  are omitted.  Similarly as in \cite{ca},  the factor that labels the row comes first, and that the factor that labels the column is second. For example, $\sigma_i\circ \sigma_k$ is in the  row marked by $\sigma_i$ and the column marked by $\sigma_k$.  

$$ 
\begin{tabular}{l|lllllllllllll}
 
 &$\sigma_{1}$& $\sigma_{2}$  &$\sigma_{3}$  &$\sigma_{4}$ &$\sigma_{5}$ &$\sigma_{6}$ &$\sigma_{7}$ &$\sigma_{8}$ &$\sigma_{9}$ &$\sigma_{10}$ & $\sigma_{11}$ &$\sigma_{12}$ &$\sigma_{13}$   \\   \hline

 $\sigma_{1}$ & $\sigma_{0}$ & $\sigma_{4}$ &$\sigma_{5}$ &$\sigma_{2}$ &$\sigma_{3}$ &$\sigma_{8}$ &$\sigma_{9}$ &$\sigma_{6}$ &$\sigma_{7}$ &$\sigma_{12}$ & $\sigma_{13}$ &$\sigma_{10}$ &$\sigma_{11}$   \\ 
 
 $\sigma_{2}$ & $\sigma_{3}$ & $\sigma_{2}$ &$\sigma_{3}$ &$\sigma_{6}$ &$\sigma_{7}$ &$\sigma_{6}$ &$\sigma_{7}$ &$\sigma_{10}$ &$\sigma_{11}$ &$\sigma_{10}$ & $\sigma_{11}$ &$\sigma_{6}$ &$\sigma_{7}$   \\ 

 $\sigma_{3}$ & $\sigma_{2}$ & $\sigma_{6}$ &$\sigma_{7}$ &$\sigma_{2}$ &$\sigma_{3}$ &$\sigma_{10}$ &$\sigma_{11}$ &$\sigma_{6}$ &$\sigma_{7}$ &$\sigma_{6}$ & $\sigma_{7}$ &$\sigma_{10}$ &$\sigma_{11}$   \\ 

 $\sigma_{4}$ & $\sigma_{5}$ & $\sigma_{4}$ &$\sigma_{5}$ &$\sigma_{8}$ &$\sigma_{9}$ &$\sigma_{8}$ &$\sigma_{9}$ &$\sigma_{12}$ &$\sigma_{13}$ &$\sigma_{12}$ & $\sigma_{13}$ &$\sigma_{8}$ &$\sigma_{9}$   \\ 

 $\sigma_{5}$ & $\sigma_{4}$ & $\sigma_{8}$ &$\sigma_{9}$ &$\sigma_{4}$ &$\sigma_{5}$ &$\sigma_{12}$ &$\sigma_{13}$ &$\sigma_{8}$ &$\sigma_{9}$ &$\sigma_{8}$ & $\sigma_{9}$ &$\sigma_{12}$ &$\sigma_{13}$   \\ 

 $\sigma_{6}$ & $\sigma_{7}$ & $\sigma_{6}$ &$\sigma_{7}$ &$\sigma_{10}$ &$\sigma_{11}$ &$\sigma_{10}$ &$\sigma_{11}$ &$\sigma_{6}$ &$\sigma_{7}$ &$\sigma_{6}$ & $\sigma_{7}$ &$\sigma_{10}$ &$\sigma_{11}$   \\ 

 $\sigma_{7}$ & $\sigma_{6}$ & $\sigma_{10}$ &$\sigma_{11}$ &$\sigma_{6}$ &$\sigma_{7}$ &$\sigma_{6}$ &$\sigma_{7}$ &$\sigma_{10}$ &$\sigma_{11}$ &$\sigma_{10}$ & $\sigma_{11}$ &$\sigma_{6}$ &$\sigma_{7}$   \\ 

 $\sigma_{8}$ & $\sigma_{9}$ & $\sigma_{8}$ &$\sigma_{9}$ &$\sigma_{12}$ &$\sigma_{13}$ &$\sigma_{12}$ &$\sigma_{13}$ &$\sigma_{8}$ &$\sigma_{9}$ &$\sigma_{8}$ & $\sigma_{9}$ &$\sigma_{12}$ &$\sigma_{13}$   \\ 

 $\sigma_{9}$ & $\sigma_{8}$ & $\sigma_{12}$ &$\sigma_{13}$ &$\sigma_{8}$ &$\sigma_{9}$ &$\sigma_{8}$ &$\sigma_{9}$ &$\sigma_{12}$ &$\sigma_{13}$ &$\sigma_{12}$ & $\sigma_{13}$ &$\sigma_{8}$ &$\sigma_{9}$   \\ 

 $\sigma_{10}$ & $\sigma_{11}$ & $\sigma_{10}$ &$\sigma_{11}$ &$\sigma_{6}$ &$\sigma_{7}$ &$\sigma_{6}$ &$\sigma_{7}$ &$\sigma_{10}$ &$\sigma_{11}$ &$\sigma_{10}$ & $\sigma_{11}$ &$\sigma_{6}$ &$\sigma_{7}$   \\ 

 $\sigma_{11}$ & $\sigma_{10}$ & $\sigma_{6}$ &$\sigma_{7}$ &$\sigma_{10}$ &$\sigma_{11}$ &$\sigma_{10}$ &$\sigma_{11}$ &$\sigma_{6}$ &$\sigma_{7}$ &$\sigma_{6}$ & $\sigma_{7}$ &$\sigma_{10}$ &$\sigma_{11}$   \\ 

 $\sigma_{12}$ & $\sigma_{13}$ & $\sigma_{12}$ &$\sigma_{13}$ &$\sigma_{8}$ &$\sigma_{9}$ &$\sigma_{8}$ &$\sigma_{9}$ &$\sigma_{12}$ &$\sigma_{13}$ &$\sigma_{12}$ & $\sigma_{13}$ &$\sigma_{8}$ &$\sigma_{9}$   \\ 

 $\sigma_{13}$ & $\sigma_{12}$ & $\sigma_{8}$ &$\sigma_{9}$ &$\sigma_{12}$ &$\sigma_{13}$ &$\sigma_{12}$ &$\sigma_{13}$ &$\sigma_{8}$ &$\sigma_{9}$ &$\sigma_{8}$ & $\sigma_{9}$ &$\sigma_{12}$ &$\sigma_{13}$  
\end{tabular} 
$$

It turns out that the above table  allows us describe all semigroups contained in  $\mathbb M$, using   pencil-and-paper techniques,  only. The argument will be by a systematic case study of possibilities. Preparing the list of all semigroups consisting of Kuratowski operations we used following principles: 
\begin{itemize}
	\item 
Minimal collection of generators is written using $ \left\langle A,B, \ldots, Z\right\rangle $, where letters denote  generators; \item When a semigroup has a few minimal collections  of generators, then its name is the first collection in the dictionary order;   \item All minimal collections of generators are written with the exception of some  containing $\sigma_0$;\item  We  leave to the readers verification  that our list is complete, sometimes we add  hints. 
\end{itemize}

\section{Semigroups with $\sigma_1$}

Observe that each semigroup which contains $\sigma_0$ is a monoid. Since  
$\sigma_{1}\circ \sigma_{1}= \sigma_{0}$,  a semigroup which  contains $\sigma_1$ is a monoid, too. 

\begin{thm}\label{t1}
There are three monoids containing $\sigma_1$: 
\begin{enumerate}
\item $ \left\langle \sigma_{1}\right\rangle = \{\sigma_{0}, \sigma_{1}\} $;
	\item  $\Ma = \left\langle\sigma_{1}, \sigma_{i}\right\rangle =  \{ \sigma_0, \sigma_{1},  \ldots , \sigma_{13}\}$, where $i \in \{2,3,4,5\}$;  \item Let  $\Ma_1 =   \left\langle\sigma_{1}, \sigma_{6}\right\rangle$. If  $j \in \{6,7, \ldots ,13\}$, then   $$\Ma_1 =   \left\langle\sigma_{1}, \sigma_{j}\right\rangle = \{\sigma_0, \sigma_1 \}  \cup \{  \sigma_{6}, \sigma_7 , \ldots , \sigma_{13}\}.$$ 
\end{enumerate}
\end{thm}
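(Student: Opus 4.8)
The plan is to exploit the single relation $\sigma_1 \circ \sigma_1 = \sigma_0$, which makes left- and right-multiplication by $\sigma_1$ two involutions of the index set that any monoid $S$ containing $\sigma_1$ must respect. Reading the row and the column labelled $\sigma_1$ in the Cayley table, left multiplication by $\sigma_1$ induces the pairing
$$2 \leftrightarrow 4, \quad 3 \leftrightarrow 5, \quad 6 \leftrightarrow 8, \quad 7 \leftrightarrow 9, \quad 10 \leftrightarrow 12, \quad 11 \leftrightarrow 13,$$
while right multiplication by $\sigma_1$ induces
$$2 \leftrightarrow 3, \quad 4 \leftrightarrow 5, \quad 6 \leftrightarrow 7, \quad 8 \leftrightarrow 9, \quad 10 \leftrightarrow 11, \quad 12 \leftrightarrow 13.$$
Together these two involutions partition $\{\sigma_2, \ldots, \sigma_{13}\}$ into the three blocks $\{\sigma_2,\sigma_3,\sigma_4,\sigma_5\}$, $\{\sigma_6,\sigma_7,\sigma_8,\sigma_9\}$ and $\{\sigma_{10},\sigma_{11},\sigma_{12},\sigma_{13}\}$; consequently, as soon as $S$ contains one member of a block it contains the whole block. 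First I would record that if $S$ contains no $\sigma_k$ with $k \geq 2$, then $S = \{\sigma_0,\sigma_1\} = \left\langle \sigma_1 \right\rangle$, which is the first monoid of item (1).

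Next I would treat the case that $S$ meets $\{\sigma_2,\sigma_3,\sigma_4,\sigma_5\}$. By the block remark $S$ then contains all four of these, and reading $\sigma_2 \circ \sigma_4 = \sigma_6$ off the table forces $\sigma_6 \in S$, hence the whole second block; then $\sigma_2 \circ \sigma_8 = \sigma_{10}$ forces $\sigma_{10} \in S$, hence the third block. Thus $S = \Ma$. Since $\sigma_3 \circ \sigma_1 = \sigma_2$, $\sigma_1 \circ \sigma_4 = \sigma_2$ and $\sigma_1 \circ \sigma_5 = \sigma_3$, each of $\sigma_2,\sigma_3,\sigma_4,\sigma_5$ produces $\sigma_2$ in the presence of $\sigma_1$, which yields $\Ma = \left\langle \sigma_1, \sigma_i \right\rangle$ for every $i \in \{2,3,4,5\}$, as claimed in item (2).

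The remaining case is that $S$ meets $\{\sigma_6,\ldots,\sigma_{13}\}$ but avoids $\{\sigma_2,\sigma_3,\sigma_4,\sigma_5\}$. Here the key observation is that the eight-element block is not inert under the two surviving blocks: starting from the second block, $\sigma_6 \circ \sigma_6 = \sigma_{10}$ drags in the third, and starting from the third, $\sigma_{10} \circ \sigma_{12} = \sigma_6$ drags in the second. So by the involution remark a single generator from $\{\sigma_6,\ldots,\sigma_{13}\}$ together with $\sigma_1$ already produces all of $\{\sigma_0,\sigma_1,\sigma_6,\ldots,\sigma_{13}\}$, giving $\Ma_1 = \left\langle \sigma_1, \sigma_j \right\rangle$ for each $j \in \{6,\ldots,13\}$ and proving item (3).

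The step I expect to be the real work, and the only genuine obstacle, is verifying that this ten-element set is actually closed, so that no product escapes back into $\{\sigma_2,\sigma_3,\sigma_4,\sigma_5\}$ and collapses the case into $\Ma$. This is a finite check: one inspects the $8 \times 8$ sub-block of the Cayley table indexed by $\sigma_6,\ldots,\sigma_{13}$ and confirms every entry again lies in $\{6,\ldots,13\}$, while the $\sigma_1$-row and $\sigma_1$-column keep the set invariant by the pairings above. Granting this closure, the three cases are exhaustive: $S$ either contains nothing beyond $\sigma_1$, or meets $\{\sigma_2,\ldots,\sigma_5\}$ and hence equals $\Ma$, or meets $\{\sigma_6,\ldots,\sigma_{13}\}$ alone and hence equals $\Ma_1$, so exactly the three monoids listed occur.
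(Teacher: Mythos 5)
Your proof is correct, and it is organized genuinely differently from the paper's. The paper verifies each generation claim by exhibiting explicit words: for item (2) the identity $\sigma_2=\sigma_3\circ\sigma_1=\sigma_1\circ\sigma_4=\sigma_1\circ\sigma_5\circ\sigma_1$, and for item (3) a separate identity expressing $\sigma_6$ in terms of $\sigma_1$ and each $\sigma_j$ with $j\in\{7,\dots,13\}$ (such as $\sigma_6=\sigma_{10}\circ\sigma_1\circ\sigma_{10}=\sigma_{11}\circ\sigma_{11}\circ\sigma_1$, and so on), together with the same closure check of the $8\times 8$ sub-block that you defer to the end. You instead observe that left and right multiplication by $\sigma_1$ are involutions whose joint orbits on $\{\sigma_2,\dots,\sigma_{13}\}$ are the three blocks $\{\sigma_2,\dots,\sigma_5\}$, $\{\sigma_6,\dots,\sigma_9\}$, $\{\sigma_{10},\dots,\sigma_{13}\}$, so any submonoid containing $\sigma_1$ is $\{\sigma_0,\sigma_1\}$ together with a union of full blocks; two products ($\sigma_2\circ\sigma_4=\sigma_6$, $\sigma_2\circ\sigma_8=\sigma_{10}$, respectively $\sigma_6\circ\sigma_6=\sigma_{10}$, $\sigma_{10}\circ\sigma_{12}=\sigma_6$) then link the blocks. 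This buys two things: the seven separate identities of the paper collapse into one uniform orbit argument, and the exhaustiveness assertion ``there are three monoids containing $\sigma_1$'' --- which the paper leaves implicit behind the three displayed equalities --- becomes an explicit case distinction on which blocks $S$ meets. The cost is nothing beyond the same finite table look-ups; you correctly identify that the only substantive verification in either version is that $\{\sigma_6,\dots,\sigma_{13}\}$ is closed under composition, which both proofs settle by inspecting the Cayley table.
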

\begin{proof}  The equality $(1)$  is obvious. 

  Since  $\sigma_2= \sigma_3 \circ \sigma_1= \sigma_1 \circ \sigma_4 = \sigma_1 \circ \sigma_5 \circ \sigma_1,$ we have $ \left\langle\sigma_{1}, \sigma_{2}\right\rangle =\left\langle\sigma_{1}, \sigma_{3}\right\rangle = \left\langle\sigma_{1}, \sigma_{4}\right\rangle = \left\langle\sigma_{1}, \sigma_{5}\right\rangle= \Ma.$ 
  
If $j \in \{6,7, \ldots ,13\}$, then any composition $\sigma_j \circ \sigma_i$ or $\sigma_k \circ \sigma_j$  belongs to $\Ma_1$, and so     $ \left\langle\sigma_{1}, \sigma_{j}\right\rangle \subseteq \Ma_1$. We have $\sigma_7= \sigma_6 \circ \sigma_1$, $\sigma_8= \sigma_1 \circ \sigma_6$, $\sigma_9= \sigma_1 \circ \sigma_7$, $\sigma_{10}= \sigma_6 \circ \sigma_6$, $\sigma_{11}= \sigma_6 \circ \sigma_7$, $\sigma_{12}= \sigma_8 \circ \sigma_6$ and $\sigma_{13}= \sigma_8 \circ \sigma_7$, and so $ \Ma_1= \left\langle\sigma_{1}, \sigma_{6}\right\rangle = \{\sigma_0, \sigma_1 \}  \cup \{  \sigma_{6}, \sigma_7 , \ldots , \sigma_{13}\}.$

Since  $\sigma_6= \sigma_7 \circ \sigma_1 = \sigma_1 \circ \sigma_8 = \sigma_1 \circ \sigma_9 \circ \sigma_1 =
\sigma_{10} \circ \sigma_1 \circ \sigma_{10} =\sigma_{11} \circ \sigma_{11} \circ \sigma_1   =\sigma_1 \circ \sigma_{12} \circ \sigma_{12}$ and $ \sigma_6 =  \sigma_1 \circ \sigma_{13} \circ \sigma_1 \circ \sigma_{13} \circ \sigma_1$, we have $ \left\langle\sigma_{1}, \sigma_{j}\right\rangle = \Ma_1$  for  $j \in \{6,7, \ldots ,13\}$.
\end{proof}

Consider the permutation  
$$  \left(
\begin{tabular}{llllllllllllll}
                  $\sigma_{0}$ &$\sigma_{1}$& $\sigma_{2}$ &$\sigma_{3}$&$\sigma_{4}$&$\sigma_{5}$&$\sigma_{6}$&$\sigma_{7}$&$\sigma_{8}$&$\sigma_{9}$  &$\sigma_{10}$& $\sigma_{11}$ &$\sigma_{12}$  &$\sigma_{13}$  \\ 
$\sigma_{0}$ &$\sigma_{1}$& $\sigma_{5}$ &$\sigma_{4}$&$\sigma_{3}$&$\sigma_{2}$ &$\sigma_{9}$&$\sigma_{8}$ &$\sigma_{7}$&$\sigma_{6}$  &$\sigma_{13}$& $\sigma_{12}$ &$\sigma_{11}$  &$\sigma_{10}$ 
\end{tabular}\right).
$$ It determines  the automorphism   $\Aa: \Ma \to \Ma$. 
\begin{thm} The identity and $\Aa$ are the only automorphisms of $\Ma$.
\end{thm}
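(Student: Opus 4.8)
The plan is to pin down an arbitrary automorphism $\phi$ of $\Ma$ by its action on a two-element generating set, and then to show that only two choices are compatible with the multiplication. First I would record that every automorphism fixes the identity, so $\phi(\sigma_0)=\sigma_0$. Next, inspecting the Cayley table one sees that $\sigma_0$ occurs exactly once, namely as $\sigma_1\circ\sigma_1$; hence $\sigma_1$ is the unique element other than $\sigma_0$ whose square is $\sigma_0$ (equivalently, $\sigma_0$ and $\sigma_1$ are the only invertible elements). Since an automorphism preserves this property, $\phi(\sigma_1)=\sigma_1$.

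The second step exploits that $\Ma$ is two-generated. By Theorem \ref{t1} we have $\Ma=\langle\sigma_1,\sigma_2\rangle$, so any homomorphism out of $\Ma$ is determined by its values on $\sigma_1$ and $\sigma_2$. As $\phi(\sigma_1)=\sigma_1$ is already fixed, $\phi$ is completely determined by the single value $\phi(\sigma_2)$, and it therefore suffices to determine the admissible values of $\phi(\sigma_2)$.

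The third step constrains $\phi(\sigma_2)$. Because $\sigma_2$ is idempotent, applying $\phi$ shows that $\phi(\sigma_2)$ is idempotent as well; reading the diagonal of the table, the idempotents are $\sigma_0,\sigma_2,\sigma_5,\sigma_7,\sigma_8,\sigma_{10},\sigma_{13}$, and injectivity rules out $\sigma_0$. Moreover, since $\phi$ is surjective and $\phi(\langle\sigma_1,\sigma_2\rangle)=\langle\sigma_1,\phi(\sigma_2)\rangle$, the element $\phi(\sigma_2)$ must generate all of $\Ma$ together with $\sigma_1$. By Theorem \ref{t1}, $\langle\sigma_1,\sigma_j\rangle=\Ma_1\subsetneq\Ma$ whenever $j\in\{6,7,\dots,13\}$, which excludes the idempotents $\sigma_7,\sigma_8,\sigma_{10},\sigma_{13}$. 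Hence $\phi(\sigma_2)\in\{\sigma_2,\sigma_5\}$.

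Finally I would read off the two resulting automorphisms. If $\phi(\sigma_2)=\sigma_2$, then $\phi$ fixes both generators, so $\phi=\Id$. If $\phi(\sigma_2)=\sigma_5$, then $\phi$ agrees on $\sigma_1$ and $\sigma_2$ with the map $\Aa$ (which sends $\sigma_1\mapsto\sigma_1$ and $\sigma_2\mapsto\sigma_5$), already shown to be an automorphism, whence $\phi=\Aa$. The only genuinely delicate point is the third step: one must be certain both that an idempotent image is forced and that the generation constraint coming from Theorem \ref{t1} eliminates every surviving idempotent except $\sigma_2$ and $\sigma_5$; everything else is bookkeeping against the Cayley table.
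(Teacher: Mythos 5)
Your proof is correct and follows essentially the same route as the paper's: both arguments use Theorem \ref{t1} to force the image of a generator into $\{\sigma_2,\sigma_3,\sigma_4,\sigma_5\}$, split that set by idempotency, and conclude by noting the automorphism is determined on a two-element generating set. The only difference is cosmetic — you track $\sigma_2$ where the paper tracks the pair $\{\sigma_3,\sigma_4\}$ — and you make explicit the step $\phi(\sigma_1)=\sigma_1$ that the paper leaves implicit.
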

\begin{proof} Delete  rows and columns marked by  $\sigma_1$ in the Cayley table for $\mathbb M$. Then, check that the operation $\sigma_3$ is in  the row or the column marked by  $\sigma_3$ only. Also,   the operation  $\sigma_4$ is in  the row or the column marked by  $\sigma_4$ only. Therefore the semigroup    $$<\sigma_{3}, \sigma_{4}> = \{\sigma_{2}, \sigma_{3}, \ldots , \sigma_{13}\},$$ has a unique minimal set of generators $\{\sigma_{3}, \sigma_{4} \}.$  The reader is left to check this with the Cayley table for $\mathbb M$.  

Suppose  $\Ga$ is an  automorphism of $\Ma$. By Theorem \ref{t1}, $\Ga$  transforms the set $\{ \sigma_2, \sigma_3, \sigma_4,\sigma_5 \}$ onto itself. However $\sigma_2$ and  $\sigma_5$ are idempotents, but $\sigma_3$ and $\sigma_4$ are not idempotents. So, there are two possibilities:  $\Ga (\sigma_3) =\sigma_3$ and $\Ga (\sigma_4) =\sigma_4$,  which implies that $\Ga$ is the identity; $\Ga (\sigma_3) =\sigma_4$ and $\Ga (\sigma_4) =\sigma_3$,  which implies $\Ga=\Aa$. 
The reader is left to check this with the Cayley table for $\mathbb M$. We offer hints: $\sigma_{2}= \sigma_{3} \circ \sigma_{4}$,     $\sigma_{5}= \sigma_{4} \circ \sigma_{3}$,  $\sigma_{6}= \sigma_{3} \circ \sigma_{2}$,  $\sigma_{7}= \sigma_{3} \circ \sigma_{3}$,  $\sigma_{8}= \sigma_{4} \circ \sigma_{4}$,  $\sigma_{9}= \sigma_{4} \circ \sigma_{5}$,  $\sigma_{10}= \sigma_{6} \circ \sigma_{6}$, $\sigma_{11}= \sigma_{6} \circ \sigma_{7}$, $\sigma_{12}= \sigma_{8} \circ \sigma_{6}$    and  $\sigma_{13}= \sigma_{8} \circ \sigma_{7}$; to verify the details of this proof. 
\end{proof}

\section{The monoid of all idempotents}
The set $\{ \sigma_{0},   \sigma_{2}, \sigma_{5},  \sigma_{7},  \sigma_{8}, \sigma_{10}, \sigma_{13}\}$ consists  of all squares in $\Ma$. These squares are idempotents and lie on the main diagonal in the Cayley table for $\Ma$. They constitute the monoid and $$ \{ \sigma_{0},   \sigma_{2}, \sigma_{5},  \sigma_{7},  \sigma_{8}, \sigma_{10}, \sigma_{13}\} = \left\langle \sigma_{0},   \sigma_{2}, \sigma_{5} \right\rangle.$$ The permutation  
$$ \left(
\begin{tabular}{llllllll}
                   $\sigma_{0}$ & $\sigma_{2}$ &$\sigma_{5}$&$\sigma_{7}$  &$\sigma_{8}$  &$\sigma_{10}$       &$\sigma_{13}$  \\ 
                   $\sigma_{0}$ & $\sigma_{2}$  & $\sigma_{5}$ & $\sigma_{8}$ &$\sigma_{7}$  & $\sigma_{10}$   &$\sigma_{13}$  
\end{tabular}\right)
$$ determines the bijection   $\Ia: \left\langle \sigma_{0},   \sigma_{2}, \sigma_{5} \right\rangle \to \left\langle \sigma_{0},   \sigma_{2}, \sigma_{5} \right\rangle $ such that   $$\Ia(\alpha \circ \beta) = \Ia( \beta) \circ \Ia(\alpha ),$$ for any $\alpha, \beta \in \left\langle \sigma_{0},   \sigma_{2}, \sigma_{5} \right\rangle$. To verify this,   
apply   equalities  $\sigma_2\circ \sigma_5= \sigma_7 $,  $\sigma_5\circ \sigma_2= \sigma_8 $, $\sigma_2\circ \sigma_5\circ \sigma_2 = \sigma_{10} $ and  $\sigma_5\circ \sigma_2\circ \sigma_5 = \sigma_{13} $. Any bijection   $\Ia: G \to H$, having property $\Ia(\alpha \circ \beta) = \Ia( \beta) \circ \Ia(\alpha )$,  transposes Cayley tables for  semigroups $G$ and $H$.  The readers can check that the  few semigroups discussed below have this property.

We shall classify all semigroups contained in the semigroup $\left\langle  \sigma_{2}, \sigma_{5} \right\rangle$. 
Every such semigroup can be extended to a monoid  by attaching $\sigma_0$ to it. This gives a complete classification of all semigroups in  $\left\langle \sigma_{0},   \sigma_{2}, \sigma_{5} \right\rangle $. 

The 
 semigroup $\left\langle  \sigma_{2}, \sigma_{5} \right\rangle$ contains six groups with exactly one element. 

Semigroups $\left\langle \sigma_{2},   \sigma_{10} \right\rangle =\{\sigma_{2},   \sigma_{10}\} $ and $\left\langle \sigma_{5},   \sigma_{13} \right\rangle =  \{\sigma_{5},   \sigma_{13}\}$  are monoids. Both consist of exactly  two  elements and are not groups, so they are isomorphic.

Semigroups  $\left\langle \sigma_{7},   \sigma_{10} \right\rangle$ and $\left\langle \sigma_{8},   \sigma_{13} \right\rangle$ are isomorphic, in particular  $\Aa [\left\langle \sigma_{7},   \sigma_{10} \right\rangle] = \left\langle \sigma_{8},   \sigma_{13} \right\rangle$. Also, semigroups     $\left\langle \sigma_{7},   \sigma_{13} \right\rangle$ and  $\left\langle \sigma_{8},   \sigma_{10} \right\rangle$ are isomorphic  by  $\Aa$.  Every of these four semigroups has exactly two elements. None of them is a monoid.  They form two types of non-isomorphic semigroups, because of bijections $\{(\sigma_7,\sigma_{10}), (\sigma_{10},\sigma_{8})\}$ and $\{(\sigma_7,\sigma_{8}), (\sigma_{10},\sigma_{10})\}$ are not isomorphisms. 

 Semigroups $\left\langle \sigma_{2},   \sigma_{7} \right\rangle =\{\sigma_{2},   \sigma_{7},   \sigma_{10}\}$ and $\left\langle \sigma_{5},   \sigma_{8} \right\rangle =\{\sigma_{5},   \sigma_{8},   \sigma_{13}\}$ are isomorphic. Also, semigroups    $\left\langle \sigma_{2},   \sigma_{8} \right\rangle=\{\sigma_{2},   \sigma_{8},   \sigma_{10}\}$ and $\left\langle \sigma_{5},   \sigma_{7} \right\rangle =\{\sigma_{5},   \sigma_{7},   \sigma_{13}\}$ are isomorphic. In fact, $\Aa [\left\langle \sigma_{2},   \sigma_{7} \right\rangle] = \left\langle \sigma_{5},   \sigma_{8} \right\rangle$ and  $\Aa [\left\langle \sigma_{2},   \sigma_{8} \right\rangle] = \left\langle \sigma_{5},   \sigma_{7} \right\rangle$.   None of these semigroups is a monoid. They form two types of non-isomorphic semigroups. Indeed, any isomorphism between  $\left\langle \sigma_{2},   \sigma_{7} \right\rangle$ and $\left\langle \sigma_{2},   \sigma_{8} \right\rangle$ must be the identity on the monoid $\left\langle \sigma_{2},   \sigma_{10} \right\rangle$, and therefore would have to be the restriction of $\Ia$. But $\Ia [\left\langle \sigma_{2},   \sigma_{7} \right\rangle] = \left\langle \sigma_{2},   \sigma_{8} \right\rangle$ and $\Ia$ restricted to   $\left\langle \sigma_{7},   \sigma_{10} \right\rangle$  is not an isomorphism. 

Semigroups $\left\langle \sigma_{2},   \sigma_{13} \right\rangle =\left\langle \sigma_{2},   \sigma_{7}, \sigma_8 \right\rangle =\{\sigma_{2},   \sigma_{7},   \sigma_{8},  \sigma_{10}, \sigma_{13}\}$ and $\left\langle \sigma_{5},   \sigma_{10} \right\rangle =\left\langle \sigma_{5},   \sigma_{7}, \sigma_8 \right\rangle =\{\sigma_{5},   \sigma_{7},   \sigma_{8},  \sigma_{10}, \sigma_{13}\}$ are not monoids. They are  isomorphic by $\Aa$.

The semigroup  $\left\langle \sigma_{2},   \sigma_{5} \right\rangle$  contains exactly one semigroup with four elements  $\left\langle \sigma_{7},   \sigma_{8} \right\rangle =
\left\langle \sigma_{10},   \sigma_{13} \right\rangle =\{   \sigma_{7},   \sigma_{8},  \sigma_{10}, \sigma_{13}\}$ which is not a monoid.

Note that $\left\langle \sigma_{2},   \sigma_{5} \right\rangle$  contains  twenty different semigroups with nine non-isomorphic types. These are  
six isomorphic groups with exactly one element $\left\langle \sigma_{2}\right\rangle\cong  \left\langle \sigma_{5} \right\rangle \cong  \left\langle \sigma_{7}\right\rangle\cong  \left\langle \sigma_{8} \right\rangle \cong  \left\langle \sigma_{10} \right\rangle \cong  \left\langle \sigma_{13}\right\rangle$,
	two    isomorphic monoids with exactly two elements $\left\langle \sigma_{2},   \sigma_{10} \right\rangle\cong \left\langle \sigma_{5},   \sigma_{13} \right\rangle$, 
	two  pairs of isomorphic semigroups with exactly two elements $\left\langle \sigma_{7},   \sigma_{10} \right\rangle\cong \left\langle \sigma_{8},   \sigma_{13} \right\rangle$ and $\left\langle \sigma_{7},   \sigma_{13} \right\rangle\cong \left\langle \sigma_{8},   \sigma_{10} \right\rangle$,
two  pairs of isomorphic semigroups with exactly three elements $\left\langle \sigma_{2},   \sigma_{7} \right\rangle\cong \left\langle \sigma_{5},   \sigma_{8} \right\rangle$ and $\left\langle \sigma_{2},   \sigma_{8} \right\rangle\cong \left\langle \sigma_{5},   \sigma_{7} \right\rangle$,
	  a semigroup with exactly four elements $\left\langle \sigma_{7},   \sigma_{8} \right\rangle$ and
	two   isomorphic semigroups with exactly five elements $\left\langle \sigma_{2},   \sigma_{13} \right\rangle \cong \left\langle \sigma_{5},   \sigma_{10} \right\rangle$, and also  $\left\langle \sigma_{2},   \sigma_{5} \right\rangle$. 	
	Thus,  $\left\langle \sigma_{2},   \sigma_{5} \right\rangle$ contains twenty different semigroups with nine types of isomorphism. But $\left\langle \sigma_0,  \sigma_{2},   \sigma_{5} \right\rangle$ contains forty one different semigroups with seventeen types of isomorphism. Indeed, adding $\sigma_0$  to  semigroups  contained in $\left\langle \sigma_{2},   \sigma_{5} \right\rangle$, which are not  monoids, we get twenty monoids with  eight non-isomorphic types. 
 Adding   $\sigma_0$ to a group  contained in $  \left\langle \sigma_2, \sigma_5\right\rangle $ we get a monoid  isomorphic to  $  \left\langle \sigma_2, \sigma_{10}\right\rangle $.

 \section{The semigroup consisting of $\{ \sigma_{6}, \sigma_{7}, \ldots , \sigma_{13}\}$  }
Using the Cayley table for $\Ma$, check that  $$\Aa[ \{\sigma_{6}, \sigma_{7}, \ldots , \sigma_{13}\}] =  \{\sigma_{6}, \sigma_{7}, \ldots , \sigma_{13}\}$$ Similarly,   check that the semigroup $\left\langle\sigma_{6}, \sigma_{9}\right\rangle = \{\sigma_{6}, \sigma_{7}, \ldots , \sigma_{13}\}$ can be represented as
  $\left\langle\sigma_{6}, \sigma_{13}\right\rangle$, $\left\langle\sigma_{7}, \sigma_{12}\right\rangle$,  $\left\langle\sigma_{8}, \sigma_{11}\right\rangle$, $\left\langle\sigma_{9}, \sigma_{10}\right\rangle$ or  $\left\langle\sigma_{11}, \sigma_{12}\right\rangle$.  Also  $\left\langle\sigma_{6}, \sigma_{9}\right\rangle =\left\langle\sigma_{6}, \sigma_{7}, \sigma_{8}\right\rangle= \left\langle \sigma_{7}, \sigma_{8}, \sigma_{9}\right\rangle =\left\langle\sigma_{10}, \sigma_{11}, \sigma_{13}\right\rangle =\left\langle\sigma_{10}, \sigma_{12}, \sigma_{13}\right\rangle$. These representations  exhaust all minimal collections of Kuratowski operations which  generate $\left\langle\sigma_{6}, \sigma_{9}\right\rangle $.  Other semigroups included in $\left\langle\sigma_{6}, \sigma_{9}\right\rangle $
 have one or two minimal collection of generators. One generator have  groups
$ \left\langle\sigma_{6}\right\rangle =\{\sigma_{6}, \sigma_{10}\} $,  $ \left\langle\sigma_{9}\right\rangle =\{\sigma_{9}, \sigma_{13}\}$, 
$ \left\langle\sigma_{11}\right\rangle =\{\sigma_{7}, \sigma_{11}\} $ and  
$\left\langle\sigma_{12}\right\rangle =\{\sigma_{8}, \sigma_{12}\} $. Each of them has exactly two elements, so they are isomorphic. Semigroups $\left\langle \sigma_{7},   \sigma_{10} \right\rangle$,   $\left\langle \sigma_{7},   \sigma_{13} \right\rangle$, $\left\langle \sigma_{8},   \sigma_{10} \right\rangle$, $\left\langle \sigma_{8},   \sigma_{13} \right\rangle$  and  $\left\langle \sigma_{7},   \sigma_{8} \right\rangle  $ are discussed in the previous part. Contained in $ \left\langle\sigma_{6}, \sigma_{9}\right\rangle $ and not previously discussed semigroups  are $ \left\langle\sigma_{6}, \sigma_{7}\right\rangle $, $ \left\langle\sigma_{6}, \sigma_{8}\right\rangle $,$ \left\langle\sigma_{7}, \sigma_{9}\right\rangle $ and $ \left\langle\sigma_{8}, \sigma_{9}\right\rangle $. We leave the reader to verify that the following are all possible pairs of Kuratowski operations which constitute a minimal collection of generators for semigroups  contained in $ \left\langle\sigma_{6}, \sigma_{9}\right\rangle $, but different from the whole.
\begin{itemize}
	\item  $\left\langle\sigma_{6}, \sigma_{7}\right\rangle =
 \left\langle\sigma_{6}, \sigma_{11}\right\rangle = \left\langle\sigma_{10}, \sigma_{11}\right\rangle = \{\sigma_{6},\sigma_{7},\sigma_{10},\sigma_{11} \} $; 
\item   $\left\langle\sigma_{6}, \sigma_{8}\right\rangle = 
 \left\langle\sigma_{6}, \sigma_{12}\right\rangle = \left\langle\sigma_{10}, \sigma_{12}\right\rangle =   \{\sigma_{6},\sigma_{8},\sigma_{10},\sigma_{12} \}$; 
\item   $\left\langle\sigma_{7}, \sigma_{9}\right\rangle = 
 \left\langle\sigma_{9}, \sigma_{11}\right\rangle = \left\langle\sigma_{11}, \sigma_{13}\right\rangle =   \{\sigma_{7},\sigma_{9},\sigma_{11},\sigma_{13} \}$; 
 \item   $\left\langle\sigma_{8}, \sigma_{9}\right\rangle = 
 \left\langle\sigma_{9}, \sigma_{12}\right\rangle = \left\langle\sigma_{12}, \sigma_{13}\right\rangle =   \{\sigma_{8},\sigma_{9},\sigma_{12},\sigma_{13} \}$; 
\end{itemize}

  \begin{pro}\label{53} Semigroups  
 $\left\langle\sigma_{6}, \sigma_{7}\right\rangle$ and  $\left\langle\sigma_{8}, \sigma_{9}\right\rangle$ are isomorphic, and also semigroups  $\left\langle\sigma_{6}, \sigma_{8}\right\rangle$
and  $\left\langle\sigma_{7}, \sigma_{9}\right\rangle$ are isomorphic, but semigroups  $\left\langle\sigma_{6}, \sigma_{7}\right\rangle$,
 $\left\langle\sigma_{6}, \sigma_{8}\right\rangle$ are not isomorphic.   
\end{pro}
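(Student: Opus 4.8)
The plan is to dispatch the two isomorphism claims with the automorphism $\Aa$ and to separate the remaining pair by counting one-sided identities.

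First I would record the four-element descriptions already listed above, namely $\left\langle\sigma_{6}, \sigma_{7}\right\rangle = \{\sigma_{6},\sigma_{7},\sigma_{10},\sigma_{11}\}$, $\left\langle\sigma_{8}, \sigma_{9}\right\rangle = \{\sigma_{8},\sigma_{9},\sigma_{12},\sigma_{13}\}$, $\left\langle\sigma_{6}, \sigma_{8}\right\rangle = \{\sigma_{6},\sigma_{8},\sigma_{10},\sigma_{12}\}$ and $\left\langle\sigma_{7}, \sigma_{9}\right\rangle = \{\sigma_{7},\sigma_{9},\sigma_{11},\sigma_{13}\}$. For the isomorphisms I would simply apply the automorphism $\Aa\colon \Ma \to \Ma$. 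Reading off the permutation that defines $\Aa$, one checks $\Aa[\{\sigma_{6},\sigma_{7},\sigma_{10},\sigma_{11}\}] = \{\sigma_{9},\sigma_{8},\sigma_{13},\sigma_{12}\}$ and $\Aa[\{\sigma_{6},\sigma_{8},\sigma_{10},\sigma_{12}\}] = \{\sigma_{9},\sigma_{7},\sigma_{13},\sigma_{11}\}$; that is, $\Aa[\left\langle\sigma_{6}, \sigma_{7}\right\rangle] = \left\langle\sigma_{8}, \sigma_{9}\right\rangle$ and $\Aa[\left\langle\sigma_{6}, \sigma_{8}\right\rangle] = \left\langle\sigma_{7}, \sigma_{9}\right\rangle$. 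Since the restriction of an automorphism to a subsemigroup is an isomorphism onto its image, the first two assertions follow at once.

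For the non-isomorphism I would extract from the Cayley table the two multiplication tables and then locate a structural invariant. The useful observation is that the idempotents of $\left\langle\sigma_{6}, \sigma_{7}\right\rangle$ are exactly $\sigma_{7}$ and $\sigma_{10}$, and both act as \emph{left} identities: reading the rows gives $\sigma_{7}\circ x = x$ and $\sigma_{10}\circ x = x$ for every $x \in \{\sigma_{6},\sigma_{7},\sigma_{10},\sigma_{11}\}$. By contrast, the idempotents of $\left\langle\sigma_{6}, \sigma_{8}\right\rangle$ are $\sigma_{8}$ and $\sigma_{10}$, and neither is a left identity, since $\sigma_{8}\circ\sigma_{6}=\sigma_{12}\neq\sigma_{6}$ and $\sigma_{10}\circ\sigma_{12}=\sigma_{6}\neq\sigma_{12}$. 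Because a left identity must in particular be idempotent, these computations show that $\left\langle\sigma_{6}, \sigma_{7}\right\rangle$ has two left identities whereas $\left\langle\sigma_{6}, \sigma_{8}\right\rangle$ has none.

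Finally I would invoke the elementary fact that the number of left identities is preserved by any semigroup isomorphism: if $\varphi$ is an isomorphism and $e\circ x = x$ for all $x$, then $\varphi(e)\circ\varphi(x)=\varphi(x)$ for all $x$, and surjectivity of $\varphi$ makes $\varphi(e)$ a left identity in the target. Since one semigroup has two left identities and the other has none, no isomorphism between them can exist, which completes the proof. The only genuine work is the bookkeeping of reading off the correct products from the Cayley table; once the tables are in hand, the asymmetry between left identities in $\left\langle\sigma_{6}, \sigma_{7}\right\rangle$ and right identities in $\left\langle\sigma_{6}, \sigma_{8}\right\rangle$ is immediate, and this asymmetry is the crux of the separation.
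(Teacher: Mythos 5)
Your proposal is correct, and all the individual products you quote agree with the Cayley table ($\sigma_{7}$ and $\sigma_{10}$ do give identity rows in $\left\langle\sigma_{6},\sigma_{7}\right\rangle$, while $\sigma_{8}\circ\sigma_{6}=\sigma_{12}$ and $\sigma_{10}\circ\sigma_{12}=\sigma_{6}$ kill both candidate left identities in $\left\langle\sigma_{6},\sigma_{8}\right\rangle$). The positive half — applying $\Aa$ — is exactly what the paper does. For the negative half you diverge: the paper argues by contradiction, observing that any isomorphism $J$ must carry the idempotent pair $\left\langle\sigma_{7},\sigma_{10}\right\rangle$ onto $\left\langle\sigma_{8},\sigma_{10}\right\rangle$, and then rules out both assignments of $J(\sigma_{7})$ by computing a single product in each case; you instead package the same underlying computations into an invariant, the number of left identities ($2$ versus $0$), which is preserved under isomorphism. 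The two arguments cost about the same amount of table-reading, but yours is slightly more robust: the invariant explains \emph{why} no assignment can work rather than checking the two possibilities one by one, and it also makes transparent the remark you close with — that the two semigroups are anti-isomorphic (via the transposition $\Ia$ the paper introduces in Section 5) but not isomorphic, since transposition swaps left and right identities. The paper's case analysis has the minor advantage of not requiring the reader to verify that \emph{every} element of $\left\langle\sigma_{6},\sigma_{7}\right\rangle$ is fixed under left multiplication by $\sigma_{7}$ and $\sigma_{10}$, only the particular products appearing in the two cases; but both routes are complete and elementary.
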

\begin{proof} Isomorphisms are defined by $\Aa$. Suppose $J: \left\langle\sigma_{6}, \sigma_{7}\right\rangle \to \left\langle\sigma_{6}, \sigma_{8}\right\rangle$ is an isomorphism. Thus $J[\left\langle\sigma_{7}, \sigma_{10}\right\rangle ]= \left\langle\sigma_{8}, \sigma_{10}\right\rangle$.  Given $J(\sigma_{7}) =\sigma_{8}$, we get $\sigma_{10}= J(\sigma_{10}) =J(\sigma_{7}\circ \sigma_{10})\not= \sigma_{8}\circ \sigma_{10}= \sigma_{8}$. But $J(\sigma_{7}) =\sigma_{10}$ implies $\sigma_{10}= J(\sigma_{7}) =J(\sigma_{10}\circ \sigma_{7})\not= \sigma_{8}\circ \sigma_{10}= \sigma_{8}$. Both possibilities lead to a contradiction. \end{proof}

So,  $\left\langle \sigma_{6},   \sigma_{9} \right\rangle$  contains eighteen different semigroups with eight non-isomorphic types. Indeed, these are  
four isomorphic groups with exactly one element $    \left\langle \sigma_{7}\right\rangle\cong  \left\langle \sigma_{8} \right\rangle \cong  \left\langle \sigma_{10} \right\rangle \cong  \left\langle \sigma_{13}\right\rangle$,
four  isomorphic groups with exactly two elements     $\left\langle  \sigma_{6}\right\rangle\cong  \left\langle \sigma_{9} \right\rangle \cong  \left\langle \sigma_{11} \right\rangle \cong  \left\langle \sigma_{12}\right\rangle$, 
	two pairs of isomorphic semigroups with exactly two elements $\left\langle \sigma_{7},   \sigma_{10} \right\rangle\cong \left\langle \sigma_{8},   \sigma_{13} \right\rangle$ and $\left\langle \sigma_{7},   \sigma_{13} \right\rangle\cong \left\langle \sigma_{8},   \sigma_{10} \right\rangle$ and five  semigroups with exactly four elements $\left\langle \sigma_{6},   \sigma_{7} \right\rangle \cong \left\langle \sigma_{8},   \sigma_{9} \right\rangle$, $\left\langle \sigma_{6},   \sigma_{8} \right\rangle \cong \left\langle \sigma_{7},   \sigma_{9} \right\rangle$ and $\left\langle \sigma_{7},   \sigma_{8} \right\rangle  $, and also $\left\langle \sigma_{6},   \sigma_{9} \right\rangle$.

\section{Remaining semigroups in  $\left\langle\sigma_{3}, \sigma_{4}\right\rangle$}  

 We have yet to discuss semigroups included in $\left\langle\sigma_{3}, \sigma_{4}\right\rangle$, not included in $\left\langle\sigma_{2}, \sigma_{5}\right\rangle$ and containing at least  one of Kuratowski operation $\sigma_{2}$,  $\sigma_{3}$, $\sigma_{4}$ or $\sigma_{5}$.  It will be discussed up to isomorphism $\Aa$. Obviously,   $\left\langle\sigma_{2} \right\rangle = \{ \sigma_{2}\}$ and $\left\langle\sigma_{5} \right\rangle = \{ \sigma_{5}\}$ are groups. 
 \subsection{Extensions of  $\left\langle\sigma_2\right\rangle$ and  $\left\langle\sigma_5\right\rangle$ with elements of $\left\langle\sigma_{6},  \sigma_{9}\right\rangle$}

Monoids $\left\langle\sigma_{2},  \sigma_{6}\right\rangle = \{ \sigma_{2}, \sigma_{6}, \sigma_{10}\}$ and $\left\langle\sigma_{2},  \sigma_{10}\right\rangle = \{ \sigma_{2}, \sigma_{10}\}$ have  different numbers of elements.
 Also,  $\left\langle\sigma_{2},  \sigma_{6}\right\rangle$ is isomorphic to  $\left\langle\sigma_{0},  \sigma_{6}\right\rangle$. Non-isomorphic semigroups $\left\langle\sigma_{2},  \sigma_{7}\right\rangle$ and $\left\langle\sigma_{2},  \sigma_{8}\right\rangle$ are discussed above.  
 Three following semigroups: 
\begin{itemize}
	\item 	
$\left\langle\sigma_{2},  \sigma_{11}\right\rangle = \left\langle\sigma_{2},  \sigma_{6}, \sigma_7 \right\rangle=\{\sigma_{2},  \sigma_{6}, \sigma_{7}, \sigma_{10},  \sigma_{11}\} $,  \item 
$\left\langle\sigma_{2},  \sigma_{12}\right\rangle = \left\langle\sigma_{2},  \sigma_{6}, \sigma_8 \right\rangle=\{\sigma_{2},  \sigma_{6}, \sigma_{8}, \sigma_{10},  \sigma_{12}\} $, \item   
$\left\langle\sigma_{2},  \sigma_{13}\right\rangle = \left\langle\sigma_{2},  \sigma_{7}, \sigma_8 \right\rangle=\{\sigma_{2},  \sigma_{7}, \sigma_{8}, \sigma_{10},  \sigma_{13}\}; $\end{itemize} are not monoids. They are not isomorphic. Indeed, any isomorphism between these semigroups  would lead an isomorphism between  $\left\langle\sigma_{6},  \sigma_{7}\right\rangle$, $ \left\langle\sigma_{6},  \sigma_{8}\right\rangle$ or $\left\langle\sigma_{7},  \sigma_{8}\right\rangle$. This is impossible,  by Proposition \ref{53}  and because   $\left\langle\sigma_{7},  \sigma_{8}\right\rangle$ consists of idempotents, but $\sigma_{6}$ is not an idempotent.   The nine-elements semigroup on the set $\{\sigma_2, \sigma_{6}, \sigma_{7}, \ldots , \sigma_{13}\}$ is represented as  $\left\langle\sigma_{2},  \sigma_{9}\right\rangle$. So, we have added four new semigroups, which are not isomorphic with the semigroups previously discussed. These are $\left\langle\sigma_{2},  \sigma_{11}\right\rangle$, $\left\langle\sigma_{2},  \sigma_{12}\right\rangle$, $\left\langle\sigma_{2},  \sigma_{13}\right\rangle$ and $\left\langle\sigma_{2},  \sigma_{9}\right\rangle$.

Using $\Aa$, we have described eight semigroups - each one isomorphic to a semigroup previously discussed - which contains $\sigma_{5}$  and elements (at least one)  of  $\left\langle\sigma_{6},  \sigma_{9}\right\rangle$.  
 Collections of generators:   $ \left\langle\sigma_{2},  \sigma_{6}, \sigma_{13} \right\rangle$, $\left\langle\sigma_{2},  \sigma_{7}, \sigma_{12} \right\rangle$, $\left\langle\sigma_{2},  \sigma_{8}, \sigma_{11} \right\rangle$, $ \left\langle\sigma_{2},  \sigma_{11}, \sigma_{12} \right\rangle$,  $\left\langle\sigma_{2},  \sigma_{11}, \sigma_{13} \right\rangle$ and $ \left\langle\sigma_{2},  \sigma_{12}, \sigma_{13} \right\rangle$ are minimal in  $\left\langle\sigma_{2},  \sigma_{9} \right\rangle$. 
 Also, collections of generators: 
 $\left\langle\sigma_{5},  \sigma_{7}, \sigma_{12} \right\rangle$,   
  $\left\langle\sigma_{5},  \sigma_{8}, \sigma_{11} \right\rangle$, 
   $ \left\langle\sigma_{5},  \sigma_{9}, \sigma_{10} \right\rangle$,
     $ \left\langle\sigma_{5},  \sigma_{10}, \sigma_{11} \right\rangle$,  
  $\left\langle\sigma_{5},  \sigma_{10}, \sigma_{12} \right\rangle$ 
  and 
  $ \left\langle\sigma_{5},  \sigma_{11}, \sigma_{12} \right\rangle$
  are minimal in  $\left\langle\sigma_{5},  \sigma_{6} \right\rangle$.

\subsection{Extensions of  $\left\langle\sigma_3\right\rangle$ and  $\left\langle\sigma_4\right\rangle$ by elements from the semigroup $\left\langle\sigma_{6},  \sigma_{9}\right\rangle$}

Semigroups $\left\langle\sigma_{3}\right\rangle  = \{\sigma_{3}, \sigma_{7},  \sigma_{11}\}$ and $\left\langle\sigma_{4}\right\rangle  = \{\sigma_{4}, \sigma_{8},  \sigma_{12}\}$ are isomorphic by $\Aa$. They are not monoids. The semigroup $\left\langle\sigma_3\right\rangle$ can be extended using  elements of $\left\langle\sigma_{6},  \sigma_{9}\right\rangle$, in three following ways.

\begin{itemize}
	\item 
$\left\langle\sigma_{3},  \sigma_{6}\right\rangle  =\left\langle\sigma_{3},  \sigma_{10}\right\rangle  = \{\sigma_{3}, \sigma_{6},  \sigma_{7}, \sigma_{10}, \sigma_{11}\}$;
\item 
$\left\langle\sigma_{3},  \sigma_{8}\right\rangle = \left\langle\sigma_{3},  \sigma_{12}\right\rangle = \{\sigma_{3}, \sigma_{6}, \sigma_{7}, \ldots , \sigma_{13}\}$;
\item 
$\left\langle\sigma_{3},  \sigma_{9}\right\rangle = \left\langle\sigma_{3},  \sigma_{13}\right\rangle = \{\sigma_{3}, \sigma_{7},  \sigma_{9}, \sigma_{11}, \sigma_{13}\}$.
	
\end{itemize}
Semigroups $\left\langle\sigma_{3},  \sigma_{6}\right\rangle$ and $ \left\langle\sigma_{3},  \sigma_{9}\right\rangle$ are not isomorphic.  Indeed, suppose $J: \left\langle\sigma_{3},  \sigma_{6}\right\rangle \to  \left\langle\sigma_{3},  \sigma_{9}\right\rangle$ is an isomorphism. Thus, $J$ is the identity on $\left\langle\sigma_{3}\right\rangle$ and $J(\sigma_6)= \sigma_9$ and $J(\sigma_{10})= \sigma_{13}$. This gives a contradiction, since $\sigma_{3} \circ  \sigma_{6}= \sigma_{10}$ and $  \sigma_{3}\circ \sigma_{9} =\sigma_{7}$. 

No semigroup $\left\langle\sigma_{3},  \sigma_{6}\right\rangle$ or $\left\langle\sigma_{3},  \sigma_{9}\right\rangle$  has a minimal collection of generators with three elements, so they give new types of isomorphism. Also,  $\left\langle\sigma_{2},  \sigma_{9}\right\rangle$ is not isomorphic to $\left\langle\sigma_{3},  \sigma_{8}\right\rangle$, since $\left\langle\sigma_{2},  \sigma_{9}\right\rangle$ has a unique pair of generators, but $\left\langle\sigma_{3},  \sigma_{8}\right\rangle$ has two pair of generators.

Using $\Aa$, we get    - isomorphic to previously discussed ones - semigroups
$\left\langle\sigma_{4},  \sigma_{9}\right\rangle  = \left\langle \sigma_{4},  \sigma_{13} \right\rangle  = \{\sigma_{4}, \sigma_{8},  \sigma_{9}\,\sigma_{12}, \sigma_{13}\}$, $\left\langle\sigma_{4},  \sigma_{6}\right\rangle = \left\langle\sigma_{4},  \sigma_{10}\right\rangle = \{\sigma_{4}, \sigma_{6},  \sigma_{8}\,\sigma_{10}, \sigma_{12}\}$ and $\left\langle\sigma_{4},  \sigma_{7}\right\rangle = \left\langle\sigma_{4},  \sigma_{11}\right\rangle = \{\sigma_{4}, \sigma_{6}, \sigma_{7}, \ldots , \sigma_{13}\}$.	
There exist minimal collections of generators, such as follows. $$\left\langle\sigma_{3},  \sigma_{8}\right\rangle = \left\langle\sigma_{3},  \sigma_{6}, \sigma_{9} \right\rangle = \left\langle\sigma_{3},  \sigma_{6}, \sigma_{13} \right\rangle =\left\langle\sigma_{3},  \sigma_{9}, \sigma_{10} \right\rangle = \left\langle\sigma_{3},  \sigma_{10}, \sigma_{13} \right\rangle $$ and $$ \left\langle\sigma_{4},  \sigma_{7}\right\rangle = \left\langle\sigma_{4},  \sigma_{6}, \sigma_{9} \right\rangle =  \left\langle\sigma_{4},  \sigma_{9}, \sigma_{10} \right\rangle = \left\langle\sigma_{4},  \sigma_{6}, \sigma_{13}\right\rangle =  \left\langle\sigma_{4},  \sigma_{10}, \sigma_{13} \right\rangle.$$

\subsection{More generators from the set   $\{\sigma_2, \sigma_{3},  \sigma_4, \sigma_{5}\}$ } Now we check that $\left\langle\sigma_{2},  \sigma_{3}\right\rangle = \{\sigma_{2},  \sigma_{3}, \sigma_{6}, \sigma_{7},  \sigma_{10}\,\sigma_{11}\} $ and $ \left\langle\sigma_{4},  \sigma_{5}\right\rangle = \{ \sigma_{4},  \sigma_{5}, \sigma_{8}, \sigma_{9},  \sigma_{12}\, \sigma_{13}\} =  \Aa [\left\langle\sigma_{2},  \sigma_{3}\right\rangle] $, and also  $\left\langle\sigma_{2},  \sigma_{4}\right\rangle = \{\sigma_{2},  \sigma_{4}, \sigma_{6}, \sigma_{8},  \sigma_{10}\,\sigma_{12}\} $ and $\left\langle\sigma_{3},  \sigma_{5}\right\rangle = \{\sigma_{3},  \sigma_{5}, \sigma_{7}, \sigma_{9},  \sigma_{11}\,\sigma_{13}\}=  \Aa [\left\langle\sigma_{2},  \sigma_{4}\right\rangle] $ are two pairs of isomorphic semigroups which give two new isomorphic type.  Each of these semigroups has six elements, so in $\Ma$ there are five six-elements semigroups of three isomorphic types, since  $\left\langle\sigma_{2}, \sigma_{5}\right\rangle$ has $6$ elements which are idempotents. 

In  $\left\langle\sigma_{3}, \sigma_{4}\right\rangle$ there are seven semigroups which have three generators and  have not two generators. These are   
\begin{enumerate} 
\item 
$\left\langle\sigma_{2},  \sigma_{3}, \sigma_{8}\right\rangle = \left\langle\sigma_{2},  \sigma_{3}, \sigma_{9}\right\rangle =\left\langle\sigma_{2},  \sigma_{3}, \sigma_{12}\right\rangle = \left\langle\sigma_{2},  \sigma_{3}, \sigma_{13}\right\rangle$; 
 \item 
$\left\langle\sigma_{4},  \sigma_{5}, \sigma_{6}\right\rangle = \left\langle\sigma_{4},  \sigma_{5}, \sigma_{7}\right\rangle =\left\langle\sigma_{4},  \sigma_{5}, \sigma_{10}\right\rangle = \left\langle\sigma_{4},  \sigma_{5}, \sigma_{11}\right\rangle $; 
 \item 
$\left\langle\sigma_{2},  \sigma_{4}, \sigma_{7}\right\rangle = \left\langle\sigma_{2},  \sigma_{4}, \sigma_{9}\right\rangle =\left\langle\sigma_{2},  \sigma_{4}, \sigma_{11}\right\rangle = \left\langle\sigma_{2},  \sigma_{4}, \sigma_{13}\right\rangle $; 
	\item 
$\left\langle\sigma_{3},  \sigma_{5}, \sigma_{6}\right\rangle = \left\langle\sigma_{3},  \sigma_{5}, \sigma_{8}\right\rangle =\left\langle\sigma_{3},  \sigma_{5}, \sigma_{10}\right\rangle = \left\langle\sigma_{3},  \sigma_{5}, \sigma_{12}\right\rangle$; 
 	\item 
$\left\langle\sigma_{2},  \sigma_{5}, \sigma_{6}\right\rangle = \left\langle\sigma_{2},  \sigma_{5}, \sigma_{9}\right\rangle =\left\langle\sigma_{2},  \sigma_{5}, \sigma_{11}\right\rangle = \left\langle\sigma_{2},  \sigma_{5}, \sigma_{12}\right\rangle $;
	\item 
$\left\langle\sigma_{2},  \sigma_{3},\sigma_{5}\right\rangle $; 
	\item 
$\left\langle\sigma_{2},  \sigma_{4},\sigma_{5}\right\rangle $.
\end{enumerate}
In later sections,  we will use the symbol $ \cong $ denoting  an isomorphism.

\section{Semigroups which are contained in $\left\langle\sigma_{3},  \sigma_{4}\right\rangle $}

Groups $ \left\langle \sigma_{2} \right\rangle \cong \left\langle \sigma_{5} \right\rangle \cong \left\langle \sigma_{7} \right\rangle \cong \left\langle \sigma_{8} \right\rangle \cong\left\langle \sigma_{10} \right\rangle \cong \left\langle \sigma_{13} \right\rangle$ have one element and are isomorphic.

 Groups $ \left\langle \sigma_{6} \right\rangle \cong \left\langle \sigma_{9} \right\rangle \cong \left\langle \sigma_{11} \right\rangle \cong \left\langle \sigma_{12}\right\rangle$, 
  monoids $ \left\langle \sigma_{2}, \sigma_{10}\right\rangle  \cong \left\langle \sigma_{5}, \sigma_{13}\right\rangle $,
and also semigroups $\left\langle \sigma_{7}, \sigma_{10} \right\rangle \cong \left\langle \sigma_{8}, \sigma_{13} \right\rangle$ and $ \left\langle \sigma_{7}, \sigma_{13} \right\rangle \cong \left\langle \sigma_{8}, \sigma_{10} \right\rangle  $ have two elements and Cayley tables as follows. 
$$  \begin{tabular}{l|ll}
 
  &$A$& $B$     \\   \hline

 $A$ & $B$ & $A$    \\ 
 
 $B$ & $A$ & $B$  \\

\end{tabular}\mbox{; }
\begin{tabular}{l|ll}
 
  &$A$& $B$     \\   \hline

 $A$ & $A$ & $B$    \\ 
 
 $B$ & $B$ & $B$  \\

\end{tabular}\mbox{; } 
\begin{tabular}{l|ll}
 
  &$A$& $B$     \\   \hline

 $A$ & $A$ & $B$    \\ 
 
 $B$ & $A$ & $B$  
\end{tabular} \mbox{ and } 
\begin{tabular}{l|ll}
 
  &$A$& $B$     \\   \hline

 $A$ & $A$ & $A$    \\ 
 
 $B$ & $B$ & $B$ \\
 \end{tabular}. $$

Monoids $ \left\langle \sigma_{2}, \sigma_{6}\right\rangle \cong \left\langle \sigma_{5}, \sigma_{9}\right\rangle$, 
 and also  semigroups $\left\langle \sigma_{2}, \sigma_{7} \right\rangle \cong \left\langle \sigma_{5}, \sigma_{8} \right\rangle$ and $ \left\langle \sigma_{2}, \sigma_{8}\right\rangle  \cong \left\langle \sigma_{5}, \sigma_{7}\right\rangle$ and $\left\langle \sigma_{3}\right\rangle \cong \left\langle \sigma_{4}\right\rangle$ have  three elements and 
Cayley tables  as follows.

$$\begin{tabular}{l|lll}
 
  &$A$& $B$  & $C$   \\   \hline

 $A$ & $A$ & $B$ & $C$   \\ 
 
 $B$ & $B$ & $C$ & $B$ \\ 

 $C$ & $C$ & $B$ & $C$
\end{tabular}\mbox{; }
\begin{tabular}{l|lll}
 
  &$A$& $B$  & $C$   \\   \hline

 $A$ & $A$ & $B$ & $C$   \\ 
 
 $B$ & $C$ & $B$ & $C$ \\ 

 $C$ & $C$ & $B$ & $C$
\end{tabular}\mbox{; }$$ $$
\begin{tabular}{l|lll}
 
  &$A$& $B$  & $C$   \\   \hline

 $A$ & $A$ & $C$ & $C$   \\ 
 
 $B$ & $B$ & $B$ & $B$ \\ 

 $C$ & $C$ & $C$ & $C$
\end{tabular}\mbox{ and }
\begin{tabular}{l|lll}
 
  &$A$& $B$  & $C$   \\   \hline

 $A$ & $B$ & $C$ & $B$   \\ 
 
 $B$ & $C$ & $B$ & $C$ \\ 

 $C$ & $B$ & $C$ & $B$
\end{tabular}. $$  

Semigroups  $  \left\langle \sigma_6, \sigma_{7}\right\rangle \cong \left\langle \sigma_8,  \sigma_{9}\right\rangle$ and $  \left\langle \sigma_6, \sigma_{8}\right\rangle \cong \left\langle \sigma_7,  \sigma_{9}\right\rangle$ and $  \left\langle \sigma_7, \sigma_{8}\right\rangle$ have  four elements and Cayley tables  as follows.
$$ \begin{tabular}{l|llll}
                     &$A$ &$B$   &$C$& $D$      \\ \hline 

$A$  &$C$ &$D$  &$A$ & $B$    \\ 

$B$   &$A$ &$B$  &$C$ & $D$  \\

$C$   &$A$ &$B$&$C$ & $D$   \\ 

$D$   &$C$ &$D$  &$A$ & $B$   \\

\end{tabular} \mbox{; } 
\begin{tabular}{l|llll}
                     &$A$  &$B$   &$C$& $D$      \\ \hline 

$A$  &$C$  &$A$  &$A$  &$C$  \\

$B$   &$D$  &$B$  &$B$  &$D$    \\

$C$   &$A$  &$C$  &$C$  &$A$  \\

$D$  &$B$  &$D$  &$D$  &$B$    \\

\end{tabular}$$ and  $$
\begin{tabular}{l|llll}
               &$A$ &$B$   &$C$& $D$  \\ \hline 

$A$   &$A$ &$C$  &$C$ & $A$   \\ 

$B$    &$D$ &$B$& $B$ & $D$   \\ 

$C$   &$A$ &$C$ &$C$ & $A$   \\ 

$D$  &$D$ &$B$ &$B$ & $D$.   \\ 

\end{tabular}$$

Semigroups  $  \left\langle \sigma_2, \sigma_{11}\right\rangle \cong  \left\langle \sigma_5, \sigma_{12}\right\rangle$ and $  \left\langle \sigma_2, \sigma_{12}\right\rangle \cong \left\langle \sigma_5,  \sigma_{11}\right\rangle$ and $\left\langle \sigma_2,  \sigma_{13}\right\rangle \cong  \left\langle \sigma_5, \sigma_{10}\right\rangle$ and $  \left\langle \sigma_3, \sigma_{6}\right\rangle \cong \left\langle \sigma_4,  \sigma_{9}\right\rangle$ and $  \left\langle \sigma_3, \sigma_{9}\right\rangle \cong \left\langle \sigma_4,  \sigma_{6}\right\rangle$ have 
five elements. 
Semigroups  $  \left\langle  \sigma_2, \sigma_{3}\right\rangle \cong  \left\langle  \sigma_4, \sigma_{5}\right\rangle$ and $  \left\langle \sigma_2, \sigma_4, \right\rangle \cong \left\langle \sigma_3, \sigma_5  \right\rangle$ and $\left\langle \sigma_2, \sigma_5\right\rangle$ have six elements.
Construction of Cayley tables for these semigroups, as well as other semigroups, we leave to the readers. One can prepare such tables removing from the Cayley table for $\Ma$ some columns and rows. Then change Kuratowski operations onto  letters of the alphabet.
For example, $\left\langle \sigma_0, \sigma_2, \sigma_5\right\rangle$ has the following Cayley table $$ \begin{tabular}{l|lllllll}
 &  $0$ &  $A$    &$B$    &$C$      &$D$  &$E$& $F$  \\ \hline 
$0$&$0$& $A$  &$B$  &$C$ &$D$ &$E$ &$F$   \\

$A$& $A$ & $A$ & $C$ &$C$ &$E$ &$E$  &$C$   \\

$B$&  $B$ & $D$ &$B$ &$F$ &$D$ &$D$ &$F$     \\

$C$&  $C$ & $E$ &$C$ &$C$ &$E$ &$E$ &$C$    \\ 

$D$&  $D$ & $D$ &$F$ &$F$ &$D$ &$D$ &$F$   \\

$E$&  $E$ & $E$ &$C$ &$C$ &$E$ &$E$ &$C$    \\

$F$&  $F$ & $D$ &$F$ &$F$ &$D$ &$D$ &$F$.   \\ 

\end{tabular} $$ 
Preparing the Cayley table for  $\left\langle \sigma_0, \sigma_2, \sigma_5\right\rangle$ we put $\sigma_{0}=0, \sigma_{2}=A, \sigma_{5}=B, \sigma_{7}=C, \sigma_{8}=D, \sigma_{10}=E$ and $\sigma_{13}=F$. This table immediately shows that semigroups $\left\langle \sigma_2, \sigma_5\right\rangle$ and $\left\langle \sigma_0, \sigma_2, \sigma_5\right\rangle$ have exactly two automorphisms. These are  identities and   restrictions of $\Aa$.

 Semigroup $  \left\langle  \sigma_6, \sigma_{9}\right\rangle$ has  eight elements. 
Semigroups  $  \left\langle \sigma_2,   \sigma_{9}\right\rangle \cong \left\langle \sigma_5,   \sigma_{6}\right\rangle$ and $  \left\langle \sigma_3,   \sigma_{8}\right\rangle \cong \left\langle \sigma_4,   \sigma_{7}\right\rangle$ have nine elements.
Semigroups  $  \left\langle \sigma_2,   \sigma_3, \sigma_{8}\right\rangle \cong \left\langle \sigma_4,   \sigma_5, \sigma_{7}\right\rangle$ and $  \left\langle \sigma_2, \sigma_4,   \sigma_{7}\right\rangle \cong \left\langle \sigma_3,   \sigma_5, \sigma_{8}\right\rangle$ and $  \left\langle \sigma_2, \sigma_5,   \sigma_{6}\right\rangle $ have ten elements.
 Semigroups  $  \left\langle \sigma_2,   \sigma_3, \sigma_{5}\right\rangle \cong \left\langle \sigma_2,   \sigma_4, \sigma_{5}\right\rangle$ have eleven elements. 
In the end, the semigroup  $  \left\langle \sigma_3, \sigma_4\right\rangle $ has twelve elements.

Thus,   the semigroup  $  \left\langle \sigma_3, \sigma_4\right\rangle $ includes  fifty seven semigroups,  among which are ten groups and fourteen monoids, and also forty three semigroups which are not monoids.    These semigroups consists of twenty eight types of non-isomorphic  semigroups,  two non-isomorphic types of groups, two non-isomorphic types of monoids which are not groups and twenty four non-isomorphic types of semigroups which are not monoids.

\section{Viewing semigroups contained in  $\Ma$}

\subsection{Descriptive data  on semigroups which are contained in $\Ma$} There are one hundred eighteen, i.e. $118= 2 \cdot 57 + 4$, semigroups which are contained in $\Ma$. These are fifty-seven semigroups contained in 
$  \left\langle \sigma_3, \sigma_4\right\rangle $, 
fifty-seven monoids formed by adding $\sigma_0$ 
to a semigroup contained in $  \left\langle \sigma_3, \sigma_4\right\rangle $, 
and  groups 
$  \left\langle \sigma_0 \right\rangle $, $  \left\langle \sigma_1 \right\rangle $, 
and monoids $  \left\langle \sigma_1, \sigma_6\right\rangle= \Ma_1 $ and $  \left\langle \sigma_1, \sigma_2\right\rangle= \Ma $.

There are fifty six types of non-isomorphic  semigroups in $\Ma$. These are twenty eight non-isomorphic types of semigroups contained in $  \left\langle \sigma_3, \sigma_4\right\rangle $, twenty six types of non-isomorphic monoids formed by adding $\sigma_0$ to a semigroup contained in $  \left\langle \sigma_3, \sigma_4\right\rangle $, and also $\Ma_1$ and $\Ma$. Indeed, adding $\sigma_0$ to a semigroup which is not a monoid we obtain a monoid. In this way, we get twenty four types of non-isomorphic monoids. Adding   $\sigma_0$ to a monoid contained in $  \left\langle \sigma_3, \sigma_4\right\rangle $ we get two new non-isomorphic types of monoids.   But, adding   $\sigma_0$ to a group  contained in $  \left\langle \sigma_3, \sigma_4\right\rangle $ we get no new type of monoid, since we get a monoid isomorphic to  $  \left\langle \sigma_2, \sigma_{10}\right\rangle $ or $  \left\langle \sigma_2, \sigma_6\right\rangle $.   The other two types are $\Ma_1$ and $\Ma$.

\subsection{Semigroups which are  not monoids}
 Below we have reproduced, using  the smallest number of generators and the dictionary order, a list of all  43 ,  included in the $ \Ma $. 

\textbf{(1).} $  \left\langle \sigma_2,   \sigma_3 \right\rangle =\{  \sigma_2,   \sigma_3,  \sigma_{6},   \sigma_7,  \sigma_{10},   \sigma_{11} \} $.

\textbf{(2).} $  \left\langle \sigma_2,   \sigma_3, \sigma_{5}\right\rangle = \{ \sigma_2,   \sigma_3, \sigma_5, \sigma_{6},   \ldots,    \sigma_{13}   \}$.

\textbf{(3).}  $  \left\langle \sigma_2,   \sigma_3, \sigma_{8}\right\rangle =  \left\langle \sigma_2,   \sigma_3, \sigma_{9}\right\rangle = \left\langle \sigma_2,   \sigma_3, \sigma_{12}\right\rangle = \left\langle \sigma_2,   \sigma_3, \sigma_{13}\right\rangle =\{ \sigma_2,   \sigma_3, \sigma_6, \sigma_{7},   \ldots,    \sigma_{13}   \} .$ 
 
\textbf{(4).}  $ \left\langle \sigma_{2}, \sigma_{4} \right\rangle = \{ \sigma_2,   \sigma_4, \sigma_6, \sigma_{8},  \sigma_{10},     \sigma_{12}   \}$. 
 
\textbf{(5).}  $  \left\langle \sigma_2,   \sigma_4, \sigma_{5}\right\rangle = \{ \sigma_2,   \sigma_4, \sigma_5,    \ldots,    \sigma_{13}   \} $.

\textbf{(6).}  $  \left\langle \sigma_2,   \sigma_4, \sigma_{7}\right\rangle =  \left\langle \sigma_2,   \sigma_4, \sigma_{9}\right\rangle =  \left\langle \sigma_2,   \sigma_4, \sigma_{11}\right\rangle =  \left\langle \sigma_2,   \sigma_4, \sigma_{13}\right\rangle =  \{ \sigma_2,   \sigma_4, \sigma_6, \sigma_7    \ldots,    \sigma_{13}   \}  $.

\textbf{(7).}  $\left\langle \sigma_{2},   \sigma_{5} \right\rangle =\{ \sigma_{2},   \sigma_{5},  \sigma_{7},   \sigma_{8},  \sigma_{10}, \sigma_{13}\} $. 
 
\textbf{(8).}  $\left\langle \sigma_{2},   \sigma_{5}, \sigma_{6} \right\rangle =\left\langle \sigma_{2},   \sigma_{5}, \sigma_{9} \right\rangle =\left\langle \sigma_{2},   \sigma_{5}, \sigma_{11} \right\rangle =\left\langle \sigma_{2},   \sigma_{5}, \sigma_{12} \right\rangle =\{ \sigma_{2},   \sigma_{5},  \sigma_{6},   \ldots,   \sigma_{13}\}$.

\textbf{(9).}  $\left\langle \sigma_{2},   \sigma_{7} \right\rangle =\{ \sigma_{2},   \sigma_{7},  \sigma_{10}\} $.  

\textbf{(10).}  $\left\langle \sigma_{2},   \sigma_{8} \right\rangle =\{ \sigma_{2},  \sigma_{8},  \sigma_{10}\}$. 

\textbf{(11).}  $\left\langle \sigma_{2},   \sigma_{9} \right\rangle =\left\langle \sigma_{2},   \sigma_{6}, \sigma_{13} \right\rangle =\left\langle \sigma_{2},   \sigma_{7}, \sigma_{12} \right\rangle =\left\langle \sigma_{2},   \sigma_{8}, \sigma_{11} \right\rangle =\left\langle \sigma_{2},   \sigma_{11}, \sigma_{12} \right\rangle =\left\langle \sigma_{2},   \sigma_{11}, \sigma_{13} \right\rangle =\left\langle \sigma_{2},   \sigma_{12}, \sigma_{13} \right\rangle=\{ \sigma_{2},  \sigma_{6},  \sigma_{7},  \ldots , \sigma_{13}\}$. 

\textbf{(12).} $\left\langle \sigma_{2},   \sigma_{11} \right\rangle = \left\langle \sigma_{2},   \sigma_{6}, \sigma_{7} \right\rangle =\{ \sigma_{2},  \sigma_{6},  \sigma_{7},  \sigma_{10}, \sigma_{11}\} $. 

\textbf{(13).} $\left\langle \sigma_{2},   \sigma_{12} \right\rangle =\left\langle \sigma_{2},   \sigma_{6}, \sigma_{8} \right\rangle =\{ \sigma_{2},  \sigma_{6},  \sigma_{8},  \sigma_{10},  \sigma_{12}\} $. 

\textbf{(14).} $\left\langle \sigma_{2},   \sigma_{13} \right\rangle =\left\langle \sigma_{2},   \sigma_{7}, \sigma_{8} \right\rangle =\{ \sigma_{2},  \sigma_{7},  \sigma_{8},   \sigma_{10}, \sigma_{13}\} $.

 \textbf{(15).} $\left\langle \sigma_{3} \right\rangle =\{ \sigma_{3},  \sigma_{7},     \sigma_{11}\}$.

 \textbf{(16).} $ \left\langle \sigma_{3}, \sigma_{4} \right\rangle =\{\sigma_2, \sigma_3, \ldots, \sigma_{13}\}$.   
  
 \textbf{(17).} $ \left\langle \sigma_{3}, \sigma_{5} \right\rangle =\{ \sigma_{3},  \sigma_{5},  \sigma_{7},  \sigma_{9},  \sigma_{11}, \sigma_{13}\}$.
 
 \textbf{(18).} $ \left\langle \sigma_{3}, \sigma_{5}, \sigma_{6} \right\rangle =\left\langle \sigma_{3}, \sigma_{5}, \sigma_{8} \right\rangle =\left\langle \sigma_{3}, \sigma_{5}, \sigma_{10} \right\rangle =\left\langle \sigma_{3}, \sigma_{5}, \sigma_{12} \right\rangle =\{ \sigma_{3},  \sigma_{5},  \sigma_{6},  \ldots ,  \sigma_{13}\}$.
 
 \textbf{(19).} $ \left\langle \sigma_{3}, \sigma_{6} \right\rangle =\left\langle \sigma_{3}, \sigma_{10} \right\rangle =\{ \sigma_{3},  \sigma_{6},  \sigma_{7},  \sigma_{10},  \sigma_{11}\} $.

\textbf{(20).} $ \left\langle \sigma_{3}, \sigma_{8} \right\rangle =\left\langle \sigma_{3}, \sigma_{12} \right\rangle =\{ \sigma_{3},  \sigma_{6},  \sigma_{7},  \ldots ,  \sigma_{13}\}$.

\textbf{(21).} $ \left\langle \sigma_{3}, \sigma_{9} \right\rangle =\left\langle \sigma_{3}, \sigma_{13} \right\rangle =\{ \sigma_{3},  \sigma_{7},  \sigma_{9},  \sigma_{11},  \sigma_{13}\} $.

\textbf{(22).} $\left\langle \sigma_{4} \right\rangle =\{ \sigma_{4},  \sigma_{8},     \sigma_{12}\}$.

\textbf{(23).} $ \left\langle \sigma_{4}, \sigma_{5} \right\rangle =\{ \sigma_{4},  \sigma_{5},  \sigma_{8},  \sigma_{9},  \sigma_{12},  \sigma_{13}\}$.

\textbf{(24).} $ \left\langle \sigma_{4}, \sigma_{5}, \sigma_{6} \right\rangle =\left\langle \sigma_{4}, \sigma_{5}, \sigma_{7} \right\rangle =\left\langle \sigma_{4}, \sigma_{5}, \sigma_{10} \right\rangle =\left\langle \sigma_{4}, \sigma_{5}, \sigma_{11} \right\rangle =\{ \sigma_{4},  \sigma_{5},    \ldots ,  \sigma_{13}\}$.

\textbf{(25).} $ \left\langle \sigma_{4}, \sigma_{6} \right\rangle =\left\langle \sigma_{4}, \sigma_{10} \right\rangle =\{ \sigma_{4},  \sigma_{6},  \sigma_{8},  \sigma_{10},  \sigma_{12}\} $.

\textbf{(26).} $ \left\langle \sigma_{4}, \sigma_{7} \right\rangle = \left\langle \sigma_{4}, \sigma_{11} \right\rangle =\{ \sigma_{4},  \sigma_{6}, \sigma_{7}, \ldots,   \sigma_{13} \}$.

\textbf{(27).} $ \left\langle \sigma_{4}, \sigma_{9} \right\rangle =\left\langle \sigma_{4}, \sigma_{13} \right\rangle =\{ \sigma_{4},  \sigma_{8},  \sigma_{9},  \sigma_{12},  \sigma_{13}\} $. 

\textbf{(28).} $ \left\langle \sigma_{5}, \sigma_{6} \right\rangle = \left\langle \sigma_{5},   \sigma_{7}, \sigma_{12} \right\rangle =\left\langle \sigma_{5},   \sigma_{8}, \sigma_{11} \right\rangle =\left\langle \sigma_{5},   \sigma_{9}, \sigma_{10} \right\rangle =\left\langle \sigma_{5},   \sigma_{10}, \sigma_{11} \right\rangle =\left\langle \sigma_{5},   \sigma_{10}, \sigma_{12} \right\rangle =\left\langle \sigma_{5},   \sigma_{11}, \sigma_{12} \right\rangle=\{ \sigma_{5},  \sigma_{6},    \ldots ,  \sigma_{13}\}$.

\textbf{(29).} $ \left\langle \sigma_{5}, \sigma_{7} \right\rangle =\{ \sigma_{5},  \sigma_{7},  \sigma_{13} \}$.

\textbf{(30).} $ \left\langle \sigma_{5}, \sigma_{8} \right\rangle=\{ \sigma_{5},  \sigma_{8},  \sigma_{13} \} $.

\textbf{(31).} $ \left\langle \sigma_{5}, \sigma_{10} \right\rangle = \left\langle \sigma_{5}, \sigma_{7}, \sigma_8 \right\rangle =\{ \sigma_{5},  \sigma_{7},  \sigma_{8},  \sigma_{10},  \sigma_{13}\} $.

\textbf{(32).} $ \left\langle \sigma_{5}, \sigma_{11} \right\rangle = \left\langle \sigma_{5}, \sigma_{7}, \sigma_9  \right\rangle =\{ \sigma_{5},  \sigma_{7},  \sigma_{9},  \sigma_{11},  \sigma_{13}\} $.

\textbf{(33).} $ \left\langle \sigma_{5}, \sigma_{12} \right\rangle = \left\langle \sigma_{5}, \sigma_{8}, \sigma_9 \right\rangle =\{ \sigma_{5},  \sigma_{8},  \sigma_{9},  \sigma_{12},  \sigma_{13}\} $.

\textbf{(34).} $ \left\langle \sigma_{6}, \sigma_{7} \right\rangle =\left\langle \sigma_{6}, \sigma_{11} \right\rangle =\left\langle \sigma_{10}, \sigma_{11} \right\rangle =\{ \sigma_{6},  \sigma_{7}, \sigma_{10}, \sigma_{11} \}$.

\textbf{(35).} $ \left\langle \sigma_{6}, \sigma_{8} \right\rangle = \left\langle \sigma_{6}, \sigma_{12} \right\rangle =\left\langle \sigma_{10}, \sigma_{12} \right\rangle =\{ \sigma_{6},  \sigma_{8}, \sigma_{10}, \sigma_{12} \} $.

\textbf{(36).} $ \left\langle \sigma_{6}, \sigma_{9} \right\rangle = \left\langle \sigma_{6}, \sigma_{13} \right\rangle = \left\langle \sigma_{7}, \sigma_{12} \right\rangle = \left\langle \sigma_{8}, \sigma_{11} \right\rangle = \left\langle \sigma_{9}, \sigma_{10} \right\rangle = \left\langle \sigma_{11}, \sigma_{12} \right\rangle =
\left\langle \sigma_{6}, \sigma_{7}, \sigma_8 \right\rangle = \left\langle \sigma_{7}, \sigma_{8}, \sigma_9 \right\rangle = \left\langle \sigma_{10}, \sigma_{11}, \sigma_{13} \right\rangle = \left\langle \sigma_{10}, \sigma_{12}, \sigma_{13} \right\rangle =
\{ \sigma_{6},  \sigma_{7}, \ldots,  \sigma_{13} \}   $.

\textbf{(37).} $ \left\langle \sigma_{7}, \sigma_{8} \right\rangle =\{ \sigma_{7},  \sigma_{8}, \sigma_{10},   \sigma_{13} \} $.

\textbf{(38).} $ \left\langle \sigma_{7}, \sigma_{9} \right\rangle =\{ \sigma_{7},  \sigma_{9}, \sigma_{11},   \sigma_{13} \}$.

\textbf{(39).} $ \left\langle \sigma_{7}, \sigma_{10} \right\rangle =\{ \sigma_{7},  \sigma_{10} \} $.

\textbf{(40).} $ \left\langle \sigma_{7}, \sigma_{13} \right\rangle =\{ \sigma_{7},  \sigma_{13} \}$.

\textbf{(41).} $ \left\langle \sigma_{8}, \sigma_{9} \right\rangle =\{ \sigma_{8},  \sigma_{9}, \sigma_{12},  \sigma_{13} \}$.

\textbf{(42).} $ \left\langle \sigma_{8}, \sigma_{10} \right\rangle =\{ \sigma_{8},  \sigma_{10}\}$. 

\textbf{(43).} $ \left\langle \sigma_{8}, \sigma_{13} \right\rangle =\{ \sigma_{8},  \sigma_{13}\}  $.

\subsection{Isomorphic types of semigroups contained in  $\Ma$}
Let systematize the list of all isomorphic types of semigroups contained in the monoid  $\Ma$. Isomorphisms, which are restrictions of the isomorphism $\Aa $, will be regarded as self-evident, and therefore they will not be commented. 

\noindent - The monoid $\Ma$ contains  12 groups with 2 isomorphic types. These are 7 one-element groups and 5 two-element groups; 

\noindent - The monoid $\Ma$ contains  8  two-element monoids with $2$ isomorphic types. These are $\sigma_0$ added to 6 one-element groups and  $ \left\langle \sigma_{2}, \sigma_{10} \right\rangle \cong  \left\langle \sigma_{5}, \sigma_{13} \right\rangle$. Also, it contains 4 two-element semigroups - not monoids, with 2 isomorphic types. These are  $ \left\langle \sigma_{7}, \sigma_{10} \right\rangle \cong  \left\langle \sigma_{8}, \sigma_{13} \right\rangle$ and
$ \left\langle \sigma_{8}, \sigma_{10} \right\rangle \cong  \left\langle \sigma_{7}, \sigma_{13} \right\rangle$;
   
\noindent - The monoid $\Ma$ contains 12  three-element monoids with 5 isomorphic types. These are $\sigma_0$ added to 4 two-element groups and also $ \left\langle \sigma_0, \sigma_{2}, \sigma_{10} \right\rangle \cong  \left\langle \sigma_0, \sigma_{5}, \sigma_{13} \right\rangle$,  $ \left\langle \sigma_0, \sigma_{7}, \sigma_{10} \right\rangle \cong  \left\langle \sigma_0, \sigma_{8}, \sigma_{13} \right\rangle$, $ \left\langle \sigma_0, \sigma_{8}, \sigma_{10} \right\rangle \cong  \left\langle \sigma_0, \sigma_{7}, \sigma_{13} \right\rangle$ and $ \left\langle \sigma_2, \sigma_{6} \right\rangle \cong  \left\langle \sigma_5, \sigma_{9} \right\rangle$;

\noindent - The monoid $\Ma$ contains 6 three-element semigroups - not   monoids,  with 3 isomorphic types. These are   $ \left\langle \sigma_2, \sigma_{7} \right\rangle \cong  \left\langle \sigma_5, \sigma_{8} \right\rangle$, $ \left\langle \sigma_2, \sigma_{8} \right\rangle \cong  \left\langle \sigma_5, \sigma_{7} \right\rangle$ and $ \left\langle \sigma_3 \right\rangle \cong  \left\langle \sigma_4  \right\rangle$;

\noindent - The monoid $\Ma$ contains 8 four-element monoids, each  contains $\sigma_0$,  with 4 isomorphic types. These are semigroups from two preceding items that can be substantially extended by $\sigma_0$;

\noindent - The monoid $\Ma$ contains 5 four-element semigroups - not   monoids,  with 3 isomorphic types. These are   $ \left\langle \sigma_6, \sigma_{7} \right\rangle \cong  \left\langle \sigma_8, \sigma_{9} \right\rangle$, $ \left\langle \sigma_6, \sigma_{8} \right\rangle \cong  \left\langle \sigma_7, \sigma_{9} \right\rangle$ and $ \left\langle \sigma_7, \sigma_8  \right\rangle$. These semigroups extended by $\sigma_0$ yield  5  monoids, all which consist of five elements,  with 3    isomorphic types;

\noindent - The monoid $\Ma$ contains 10 five-element semigroups - not   monoids,  with 5 isomorphic types. These are   $ \left\langle \sigma_2, \sigma_{11} \right\rangle \cong  \left\langle \sigma_5, \sigma_{12} \right\rangle$, $ \left\langle \sigma_2, \sigma_{12} \right\rangle \cong  \left\langle \sigma_5, \sigma_{11} \right\rangle$, $ \left\langle \sigma_2, \sigma_{13} \right\rangle \cong  \left\langle \sigma_5, \sigma_{10} \right\rangle$, $ \left\langle \sigma_3, \sigma_{6} \right\rangle \cong  \left\langle \sigma_4, \sigma_{9} \right\rangle$ and  $ \left\langle \sigma_3, \sigma_{9} \right\rangle \cong  \left\langle \sigma_4, \sigma_{6} \right\rangle$.  These semigroups extended by $\sigma_0$ yield  10  monoids, all which consist of six elements,  with 5    isomorphic types. We get 10 new isomorphic types, since the semigroups are distinguished by semigroups  $ \left\langle \sigma_3 \right\rangle$ and  $ \left\langle \sigma_4\right\rangle$,  and by not isomorphic  semigroups  $ \left\langle \sigma_6, \sigma_{7} \right\rangle$, $ \left\langle \sigma_6, \sigma_{8} \right\rangle$ and $ \left\langle \sigma_7, \sigma_{8} \right\rangle$.

\noindent - The monoid $\Ma$ contains 5 six-element semigroups - not   monoids, with 3 isomorphic types. These are   $ \left\langle \sigma_2, \sigma_{3} \right\rangle \cong  \left\langle \sigma_4, \sigma_{5} \right\rangle$, $ \left\langle \sigma_2, \sigma_{4} \right\rangle \cong  \left\langle \sigma_3, \sigma_{5} \right\rangle$ and  $ \left\langle \sigma_2, \sigma_{5} \right\rangle $.  These semigroups extended by $\sigma_0$ yield   5  monoids, all which consist of seven elements,  with 3    isomorphic types. We get 6 new isomorphic types, since the semigroups  are distinguished by  not isomorphic   semigroups  $ \left\langle \sigma_6, \sigma_{7} \right\rangle$, $ \left\langle \sigma_6, \sigma_{8} \right\rangle$ and $ \left\langle \sigma_7, \sigma_{8} \right\rangle$;

\noindent - The monoid $\Ma$ contains no seven-element semigroup - not a  monoid,   no eight-element monoid 
and the only semigroup   $ \left\langle \sigma_6, \sigma_{9} \right\rangle$ with exactly eight elements and the only monoid    $ \left\langle \sigma_0, \sigma_6, \sigma_{9} \right\rangle$ with exactly nine elements;
 
\noindent - The monoid $\Ma$ contains 4 nine-element semigroups - not   monoids,  with 2 isomorphic types. These are   $ \left\langle \sigma_2, \sigma_{9} \right\rangle \cong  \left\langle \sigma_5, \sigma_{6} \right\rangle$ and  $ \left\langle \sigma_3, \sigma_8 \right\rangle \cong  \left\langle \sigma_4, \sigma_7 \right\rangle$. The semigroup  $ \left\langle \sigma_2, \sigma_{9} \right\rangle$ does not contain a semigroup isomorphic to  $\left\langle \sigma_3 \right\rangle$, hence it is not isomorphic to $ \left\langle \sigma_3, \sigma_{8} \right\rangle$.  These semigroups extended by $\sigma_0$ yield  4  monoids, all which  consist of ten elements,  with 2    isomorphic types;

\noindent - The monoid $\Ma$ contains 6 ten-element semigroups - not   monoids,  with 4 isomorphic types. These are $\left\langle \sigma_1, \sigma_{6} \right\rangle$,  $ \left\langle \sigma_2, \sigma_{3}, \sigma_8 \right\rangle \cong  \left\langle \sigma_4, \sigma_{5}, \sigma_6 \right\rangle$, $ \left\langle \sigma_2, \sigma_{4}, \sigma_7 \right\rangle \cong  \left\langle \sigma_3, \sigma_{5}, \sigma_6 \right\rangle$ and  $ \left\langle \sigma_2, \sigma_5, \sigma_6 \right\rangle $. These semigroups (except $\left\langle \sigma_1, \sigma_{6} \right\rangle$)   extended by $\sigma_0$ yield  5  monoids, all which  consist of ten elements,  with 3    isomorphic types. We get 6 new isomorphic types, since the semigroups  are distinguished by  not isomorphic   semigroups  $ \left\langle \sigma_2, \sigma_{3} \right\rangle$, $ \left\langle \sigma_2, \sigma_{4} \right\rangle$ and $ \left\langle \sigma_2, \sigma_{5} \right\rangle$;

\noindent - The monoid $\Ma$ contains 2 isomorphic semigroups - not   monoids,  which consist of 11 elements i.e.,    $ \left\langle \sigma_2, \sigma_{3}, \sigma_5 \right\rangle \cong  \left\langle \sigma_2, \sigma_{4}, \sigma_5 \right\rangle$. These semigroups extended by $\sigma_0$ yield  2 isomorphic  monoids, which consist of 12 elements. The monoid $\Ma$ contains no larger semigroup with the exception of itself.

\section{Cancellation rules motivated by some  topological properties.} 

\subsection{Some consequences of the axiom $\emptyset = \emptyset^-$} So far, we used only following relations (above named cancellation rules): 
$\sigma_2 \circ \sigma_{2} = \sigma_2$, $\sigma_1 \circ \sigma_{1} = \sigma_0$, $\sigma_2 \circ \sigma_{12} = \sigma_6$  and $ \sigma_2 \circ \sigma_{13} = \sigma_7$. When one assumes $X\not=\emptyset = \emptyset^-$, then 
$$X=\sigma_0(X)=\sigma_2(X)= \sigma_5(X)=\sigma_7(X)=\sigma_8(X)=\sigma_{10}(X)=\sigma_{13}(X)$$ and $$\emptyset=\sigma_1(X)=\sigma_3(X)=\sigma_4(X)=\sigma_6(X)=\sigma_9(X)=\sigma_{11}(X)=\sigma_{12}(X).$$
 Using the substitution $A \mapsto A^c$, one obtains  equivalent relations between operation from  the set $\{\sigma_1, \sigma_3, \sigma_4,\sigma_6,\sigma_9, \sigma_{11},\sigma_{12}\}$, and conversely. Therefore, cancellation rules are 
topologically reasonable  only between the operations from the monoid $$ \left\langle \sigma_0, \sigma_{2}, \sigma_5 \right\rangle = \{\sigma_0,\sigma_2, \sigma_5, \sigma_7, \sigma_8, \sigma_{10}, \sigma_{13}\} .$$
 T. A. Chapman, see \cite{chap}, consider properties of subsets with respect to such relations. Below, we are going  to identify relations that are determined by some topological spaces, compare \cite{au} and \cite{auc}. 

\subsection{The relation $\sigma_0=\sigma_2$} If a topological space $X$ is discrete, then there exist two Kuratowski operation, only. These are $\sigma_0$ and  $\sigma_1$. So, the monoid of Kuratowski operations reduced to the group $\left\langle \sigma_1 \right\rangle $.

The relation  $\sigma_0=\sigma_2$ is equivalent to any  relation  $\sigma_0=\sigma_i$, where $i\in \{5, 7, 8 , 10.13\}$. Any such relation implies that every subset of $X$ has to be closed and open. However, one can check these using (only) facts that  $\sigma_1$ is an involution and $
\sigma_2$ is an idempotent and the cancellation rules, i.e. the Cayley table for $\Ma$. So,  $\sigma_0=\sigma_2$ follows $$\sigma_0=\sigma_2 =\sigma_5=\sigma_7 =\sigma_8=\sigma_{10} =\sigma_{13}.$$

\subsection{The relation  $\sigma_2=\sigma_5$} Topologically,  $\sigma_2=\sigma_5$ means that $X$ must be discrete.  
This is so because $A^{c-c} \subseteq A \subseteq A^-$ for any $A\subseteq X$.

\subsection{The relation  $\sigma_2=\sigma_7$} Topologically,  the relation $\sigma_2=\sigma_7$ implies $\sigma_0=\sigma_2$. 
But it requires the use of topology axioms $\emptyset =\emptyset^-$ and $C^- \cup B^-= (C\cup B)^-$ for each $C$ and $ B$.  
\begin{lem} \label{33} For any topological space $\sigma_2=\sigma_7$ implies $ \sigma_2=\sigma_8$.
\end{lem}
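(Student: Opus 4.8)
The plan is to read the hypothesis $\sigma_2=\sigma_7$ in topological language and reduce the conclusion $\sigma_2=\sigma_8$ to showing that a certain closed set is empty. Writing $A^-$ for $\sigma_2(A)$ and recalling that $\sigma_5(A)=A^{c-c}$ is the interior, the factorizations $\sigma_7=\sigma_2\circ\sigma_5$ and $\sigma_8=\sigma_5\circ\sigma_2$ let me restate the hypothesis as $B^-=(\sigma_5(B))^-$ for every $B$ (closure equals closure of interior), and the desired conclusion as $A^-=\sigma_5(A^-)$ for every $A$ (every closure is open). Since $\sigma_5(A^-)\subseteq A^-$ always holds, it suffices to prove the reverse inclusion, i.e. that the closed set $A^-$ coincides with its own interior.

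First I would record the single genuinely topological consequence of the hypothesis: applying $B^-=(\sigma_5(B))^-$ to a closed set $F$ (so that $F^-=F$) yields $F=(\sigma_5(F))^-$, whence $\sigma_5(F)=\emptyset$ forces $F=\emptyset^-=\emptyset$ by the axiom $\emptyset=\emptyset^-$. Thus \emph{every closed set with empty interior is empty}. This is the only point at which a topology axiom is used; the increasing, idempotent and extensive behaviour of $\sigma_2$ assumed in Part~2 will suffice for everything else.

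Next, for a fixed $A$ I would pass to the frontier of the closed set $A^-$, namely $D=A^-\cap\sigma_6(A)$, where $\sigma_6(A)=A^{-c-}=(\sigma_5(A^-))^c$ is the closure of the complement of $A^-$; equivalently $D=A^-\setminus\sigma_5(A^-)$. As an intersection of two closures, $D$ is closed. I would then check $\sigma_5(D)=\emptyset$: its interior is an open subset of $A^-$, hence contained in the largest such set $\sigma_5(A^-)$, yet it is also contained in $D$, which is disjoint from $\sigma_5(A^-)$; the two conditions force $\sigma_5(D)=\emptyset$. By the previous paragraph $D=\emptyset$, so $A^-=\sigma_5(A^-)$, which is precisely $\sigma_2=\sigma_8$.

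The main obstacle is spotting the right auxiliary set and showing its interior is empty, that is, recognizing that the frontier of a closed set is again a closed set with empty interior, so that the hypothesis applies to it. Once $D$ is at hand, both the closedness of $D$ and the emptiness of $\sigma_5(D)$ are formal consequences of the order and complementation properties of interior and closure, and no appeal to additivity $(C\cup B)^-=C^-\cup B^-$ is required; that axiom is needed only for the subsequent strengthening of $\sigma_2=\sigma_8$ to full discreteness $\sigma_0=\sigma_2$.
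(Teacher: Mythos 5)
Your proof is correct and follows essentially the same route as the paper's: both extract from $\sigma_2=\sigma_7$ (together with $\emptyset=\emptyset^-$) the fact that a set with empty interior must be empty, apply this to the frontier $D=A^-\cap A^{-c-}$ of the closed set $A^-$, and conclude $A^{-c-}=A^{-c}$, i.e.\ $\sigma_2=\sigma_8$. The one point of divergence is the justification that $D$ has empty interior: the paper derives $(A^-\cap A^{-c-})^{c-c}=\emptyset$ from the additivity axiom $(C\cup B)^-=C^-\cup B^-$, and indeed announces just before the lemma that this axiom is required, whereas you obtain it from monotonicity alone via $\sigma_5(D)\subseteq \sigma_5(A^-)\cap D=\emptyset$. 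Your observation checks out --- one can also see it directly from $D^{c-}\supseteq A^{-c-}\cup A^{-c-c}=X$, which uses only that $\sigma_2$ is increasing and extensive --- so the lemma in fact needs only $\emptyset=\emptyset^-$ beyond the abstract hypotheses of Part 2, a mild sharpening of the paper's accounting of which topology axioms enter.
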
 \begin{proof} If $\sigma_2=\sigma_7$, then $A\not=\emptyset \Rightarrow A^{c-c}\not= \emptyset,$ for any $A\subseteq X$. Indeed, if $A^{c-c}= \emptyset$, then  $\sigma_7(A)=\emptyset^-=\emptyset$. Since $A\not=\emptyset$, then $\sigma_2(A)\not=\emptyset$. Hence $\sigma_2(A)\not=\sigma_7(A)$, a contradiction. 

The axiom $C^- \cup B^-= (C\cup B)^-$  implies that always $$(A^- \cap A^{-c-})^{c-c}= \emptyset .$$ Thus, the additional assumption  $\sigma_2=\sigma_7$ follows that always $A^- \cap A^{-c-}= \emptyset .$ Therefore, always $A^{-c-}=A^{-c}$, but this means that any closed set has to be open: in other words,  $\sigma_2=\sigma_8$. 
\hfill\end{proof}
  
\begin{pro}  For any topological space $\sigma_2=\sigma_7$ implies $ \sigma_0=\sigma_2$
\end{pro}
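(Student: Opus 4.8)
The plan is to funnel the hypothesis into the already-understood relation $\sigma_2=\sigma_5$, which is the one that forces discreteness. The first move is to apply Lemma \ref{33}: the assumption $\sigma_2=\sigma_7$ is thereby upgraded to the pair of relations $\sigma_2=\sigma_7$ \emph{and} $\sigma_2=\sigma_8$. This is the only step where the full topological axiom $C^-\cup B^-=(C\cup B)^-$ is consumed; once both relations are in hand, the rest of the argument is pure bookkeeping inside the Cayley table for $\Ma$ together with one elementary inclusion.

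Next I would extract the relation $\sigma_5=\sigma_7$ from $\sigma_2=\sigma_8$ by a formal manipulation. Writing $\sigma_2=\sigma_8$ as $A^-=A^{-c-c}$ and substituting $A\mapsto A^c$ (equivalently, composing both sides on the right with $\sigma_1$, which sends the row $\sigma_2$ to $\sigma_3$ and the row $\sigma_8$ to $\sigma_9$) produces $\sigma_3=\sigma_9$, i.e.\ $A^{c-}=A^{c-c-c}$. Composing once more on the left with $\sigma_1$ (which carries $\sigma_3$ to $\sigma_5$ and $\sigma_9$ to $\sigma_7$) yields $\sigma_5=\sigma_7$. Conceptually this is just the statement that the interior $\sigma_5(A)=A^{c-c}$ of every set is closed, since $\sigma_2\circ\sigma_5=\sigma_7$ in the table; I would present it in whichever of the two forms reads more cleanly.

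Combining $\sigma_5=\sigma_7$ with the standing hypothesis $\sigma_2=\sigma_7$ gives $\sigma_2=\sigma_5$. At this point I would invoke the observation from the paragraph on the relation $\sigma_2=\sigma_5$: because $A^{c-c}\subseteq A\subseteq A^-$ holds in every topological space, the equality $\sigma_5(A)=\sigma_2(A)$ squeezes $A$ between two coinciding sets, so $A=A^-$ for every $A$, which is exactly $\sigma_0=\sigma_2$.

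I expect the genuine obstacle to be concentrated entirely in the passage $\sigma_2=\sigma_7\Rightarrow\sigma_2=\sigma_8$, namely Lemma \ref{33}, where additivity of the closure cannot be avoided and where the topological content really lives. Everything after that is a short, mechanical chase through the Cayley table (the two pre-/post-compositions with $\sigma_1$) followed by the one-line squeeze $A^{c-c}\subseteq A\subseteq A^-$, so the remaining risk is merely clerical: keeping the composition convention straight so that the substitutions land on $\sigma_3=\sigma_9$ and then on $\sigma_5=\sigma_7$ rather than on some transposed pair.
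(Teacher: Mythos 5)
Your proof is correct and follows essentially the same route as the paper: both arguments use Lemma \ref{33} as the sole topological input, conjugate one of the two relations by $\sigma_1$ (the paper turns the hypothesis $\sigma_2=\sigma_7$ into $\sigma_5=\sigma_8$, you turn the conclusion $\sigma_2=\sigma_8$ into $\sigma_5=\sigma_7$), combine to get $\sigma_2=\sigma_5$, and finish with the squeeze $A^{c-c}\subseteq A\subseteq A^-$. The difference is purely a matter of which relation gets conjugated, and your Cayley-table bookkeeping is accurate throughout.
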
 \begin{proof} But the relation  $\sigma_2=\sigma_7$ is equivalent with $\sigma_5=\sigma_8$. By lemma \ref{33}, we get $\sigma_2=\sigma_5$. Finally $\sigma_0=\sigma_2$. 
\hfill\end{proof}

The relation    $\sigma_2=\sigma_7$ has interpretation without the axiom $\emptyset =\emptyset^-$. Indeed, suppose
$X=\{a, b\}$. Put  $$\sigma_2(\emptyset) =\{a\}=\sigma_2(\{a\}) \mbox{  and }  X=\sigma_2(X)=\sigma_2(\{b\}).$$ Then,  check  that    $\sigma_2=\sigma_7$  and   $$ \sigma_8(\emptyset) =\sigma_5(\{a\}) =\sigma_4(\{b\}) =\sigma_1(X) =\emptyset:$$ in other words,  $\sigma_2=\sigma_7$  and $\sigma_2\not=\sigma_8$. However, $\sigma_2=\sigma_7$ is equivalent to $\sigma_5=\sigma_8$. 

This relation implies $\sigma_2=\sigma_7= \sigma_{10}$ and $\sigma_5=\sigma_8= \sigma_{13}$ and $\sigma_3=\sigma_6= \sigma_{11}$ and $\sigma_4=\sigma_9= \sigma_{12}$. For this interpretation, the monoid $\Ma/{(\sigma_2=\sigma_7)}$ - consisting of Kuratowski operation over a such $X$, has six elements, only.
In $\Ma/{(\sigma_2=\sigma_7)}$, there are relations  covered by the following proposition, only.
\begin{pro} For any monoid with the Cayley table  as for $\Ma$, the relation $\sigma_2=\sigma_7$ implies:  
\begin{itemize}
	\item $ \sigma_{2}=\sigma_{7} = \sigma_{10}=\sigma_{7}\circ \sigma_2$;
		\item $ \sigma_5=\sigma_{1}\circ \sigma_2\circ \sigma_1=\sigma_{1}\circ \sigma_7\circ \sigma_1=\sigma_8 = \sigma_5\circ \sigma_2=\sigma_5\circ \sigma_7 =\sigma_{13}$;
			\item $ \sigma_3=\sigma_{2}\circ \sigma_1 = \sigma_{7}\circ \sigma_1 =\sigma_6 = \sigma_{10}\circ \sigma_1 = \sigma_{11}$;
				\item $ \sigma_4=\sigma_{1}\circ \sigma_2 = \sigma_{1}\circ \sigma_7 =\sigma_9 =\sigma_{1}\circ \sigma_{10} = \sigma_{12}$. \hfill $\Box$ \end{itemize}
\end{pro}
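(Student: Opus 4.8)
The plan is to treat the statement as a purely computational fact about the given Cayley table, exploiting that $\sigma_1$ is a \emph{unit} of the monoid: since $\sigma_1 \circ \sigma_1 = \sigma_0$, both left composition $x \mapsto \sigma_1 \circ x$ and right composition $x \mapsto x \circ \sigma_1$ are bijections of $\Ma$, so any relation may be transported by multiplying both sides by $\sigma_1$. Applying this to the hypothesis $\sigma_2 = \sigma_7$, I would first read off three companion relations from the table: composing on the right by $\sigma_1$ gives $\sigma_3 = \sigma_2 \circ \sigma_1 = \sigma_7 \circ \sigma_1 = \sigma_6$; composing on the left gives $\sigma_4 = \sigma_1 \circ \sigma_2 = \sigma_1 \circ \sigma_7 = \sigma_9$; and conjugating gives $\sigma_5 = \sigma_1 \circ \sigma_2 \circ \sigma_1 = \sigma_1 \circ \sigma_7 \circ \sigma_1 = \sigma_8$, where the rightmost equality in each line is simply the value recorded in the relevant cell of the table.

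Next I would produce the collapse onto the four-fold operations using the idempotence $\sigma_2 \circ \sigma_2 = \sigma_2$. The table gives $\sigma_7 \circ \sigma_2 = \sigma_{10}$, and substituting $\sigma_7 = \sigma_2$ yields $\sigma_{10} = \sigma_2 \circ \sigma_2 = \sigma_2$; this is exactly the first bullet $\sigma_2 = \sigma_7 = \sigma_{10} = \sigma_7 \circ \sigma_2$. With $\sigma_{10} = \sigma_2$ in hand I would transport by $\sigma_1$ once more: right composition gives $\sigma_{11} = \sigma_{10} \circ \sigma_1 = \sigma_2 \circ \sigma_1 = \sigma_3$, and left composition gives $\sigma_{12} = \sigma_1 \circ \sigma_{10} = \sigma_1 \circ \sigma_2 = \sigma_4$. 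Combined with the relations of the first paragraph, this delivers $\sigma_3 = \sigma_6 = \sigma_{11}$ (third bullet) and $\sigma_4 = \sigma_9 = \sigma_{12}$ (fourth bullet), once the entries $\sigma_{10} \circ \sigma_1 = \sigma_{11}$ and $\sigma_1 \circ \sigma_{10} = \sigma_{12}$ are noted.

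Finally, for the second bullet I would either verify the remaining composite expressions directly and substitute the relations already found, computing $\sigma_5 \circ \sigma_2 = \sigma_8$ and $\sigma_5 \circ \sigma_7 = \sigma_{13}$ from the table and then collapsing both to $\sigma_5$ via $\sigma_5 = \sigma_8$ and $\sigma_7 = \sigma_2$; or, more slickly, observe that the equalities $\sigma_5 = \sigma_8 = \sigma_{13}$ of the second bullet are the image under the automorphism $\Aa$ of the first bullet, since $\Aa$ carries $\sigma_2, \sigma_7, \sigma_{10}$ to $\sigma_5, \sigma_8, \sigma_{13}$ and the hypothesis $\sigma_2 = \sigma_7$ is equivalent to its $\Aa$-image $\sigma_5 = \sigma_8$, as already recorded in the surrounding text. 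I expect no genuine obstacle: the whole argument is a finite chain of table look-ups held together by the invertibility of $\sigma_1$ and the idempotence of $\sigma_2$. The only point requiring care is bookkeeping discipline, namely keeping the composition-order convention (the row factor applied first) consistent so that each asserted product is matched against the correct cell of the table.
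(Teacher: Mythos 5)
Your proposal is correct and is essentially the argument the paper intends: the proposition is stated with the intermediate composites (e.g.\ $\sigma_{10}=\sigma_7\circ\sigma_2$, $\sigma_8=\sigma_1\circ\sigma_7\circ\sigma_1$) serving as the proof, and your chain of table look-ups, using the invertibility of $\sigma_1$ and the idempotence of $\sigma_2$, fills in exactly those steps with the correct row-first composition convention. Nothing further is needed.
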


Thus, the Cayley table does not contain the complete information resulting from the axioms of topology.

\subsection{The relation  $\sigma_2=\sigma_8$} Topologically,  the relation $\sigma_2=\sigma_8$ means that any closed set is  open, too. Thus, if $X=\{a,b,c\}$ is a topological space with the open sets $X$, $\emptyset$,  $\{a,b\}$ and $\{c\}$, then  $\Ma/{(\sigma_2=\sigma_8)}$ is the monoid of all Kuratowski operations over $X$. Relations $\sigma_2=\sigma_8$ and $\sigma_5=\sigma_7$ are equivalent. They imply  relations: $\sigma_2=\sigma_{8}=\sigma_{10}$, $\sigma_5=\sigma_{7}=\sigma_{13}$,  $ \sigma_3=\sigma_{9}=\sigma_{11}$ and  $\sigma_4=\sigma_{6}=\sigma_{12}$.  
The permutation  
$$ \left(
\begin{tabular}{lllllll}
                   $\sigma_{0}$ & $\sigma_{1}$ &$\sigma_{2}$&$\sigma_{3}$  &$\sigma_{4}$  &$\sigma_{5}$       \\ 
                   $\sigma_{0}$ & $\sigma_{1}$  & $\sigma_{5}$ & $\sigma_{4}$ &$\sigma_{3}$  & $\sigma_{2}$    
\end{tabular}\right)
$$ determines the isomorphism between monoids  $\Ma/{(\sigma_2=\sigma_7)}$ and $\Ma/{(\sigma_2=\sigma_8)}$.

\subsection{The relations  $\sigma_2=\sigma_{10}$ and $\sigma_2=\sigma_{13}$ } Topologically,  the relation $\sigma_2=\sigma_8$ means that any non-empty closed set has non-empty interior. For each $A$, the closed set $A^- \cap A^{-c-}$ has empty interior, so the relation $\sigma_2=\sigma_{10}$ implies  that $A^{-c}$ is closed. Hence, any open set has to be closed, what implies  $\sigma_2=\sigma_{8}$.  The relations   $\sigma_2=\sigma_{13}$ follows that each closed set has to be open, so it  implies  $\sigma_2=\sigma_{8}$, too.

\subsection{The relation  $\sigma_7=\sigma_{8}$} Using the Cayley table for $\Ma$, one can check that  the relation  $\sigma_7=\sigma_{8}$ and $\sigma_{10}=\sigma_{13}$ are equivalent. Each of them gives
	 $\sigma_7=\sigma_{8} =\sigma_{10}=\sigma_{13}$ and $\sigma_6=\sigma_{9} =\sigma_{11}=\sigma_{12}$.
 If $X=\{a,b\}$ is a topological space with the open sets $X$, $\emptyset$ and $\{a\}$, then  $\Ma/{(\sigma_7=\sigma_8)}$ is the monoid of all Kuratowski operations over $X$ and  consists of $8$ elements. 
 
 \subsection{The relation  $\sigma_7=\sigma_{10}$} Using the Cayley table for $\Ma$, one can check that  the relations  $\sigma_7=\sigma_{10}$,  $\sigma_8=\sigma_{13}$,  
	  $\sigma_{6}=\sigma_{11}$ and $\sigma_{9} =\sigma_{12}$ are equivalent.   
	If $X$  is a sequence converging to the point $g$ and $g\in X$, then 
 $\Ma/{(\sigma_7=\sigma_{10})}$ is the monoid of all Kuratowski operations over $X$  and  consists of $10$ elements. 

\subsection{The relation  $\sigma_7=\sigma_{13}$} Using the Cayley table for $\Ma$, one can check that  the relations  $\sigma_7=\sigma_{13}$ and  $\sigma_8=\sigma_{10}$  are equivalent.  Also  $  \sigma_{6} =  \sigma_{12}$ and $    \sigma_{9} =  \sigma_{11}$. These relations give the  monoid  with the following Cayley table, where the row and column marked by the identity  are omitted.

$$ \begin{tabular}{l|lllllllll}   
 &$\sigma_{1}$& $\sigma_{2}$  &$\sigma_{3}$  &$\sigma_{4}$ &$\sigma_{5}$ &$\sigma_{6}$ &$\sigma_{7}$ &$\sigma_{8}$ &$\sigma_{9}$    \\ \hline 
 $\sigma_{1}$ & $\sigma_{0}$ & $\sigma_{4}$ &$\sigma_{5}$ &$\sigma_{2}$ &$\sigma_{3}$ &$\sigma_{8}$ &$\sigma_{9}$ &$\sigma_{6}$ &$\sigma_{7}$   \\ 

 $\sigma_{2}$ & $\sigma_{3}$ & $\sigma_{2}$ &$\sigma_{3}$ &$\sigma_{6}$ &$\sigma_{7}$ &$\sigma_{6}$ &$\sigma_{7}$ &$\sigma_{8}$ &$\sigma_{9}$   \\  

 $\sigma_{3}$ & $\sigma_{2}$ & $\sigma_{6}$ &$\sigma_{7}$ &$\sigma_{2}$ &$\sigma_{3}$ &$\sigma_{8}$ &$\sigma_{9}$ &$\sigma_{6}$ &$\sigma_{7}$    \\  

 $\sigma_{4}$ & $\sigma_{5}$ & $\sigma_{4}$ &$\sigma_{5}$ &$\sigma_{8}$ &$\sigma_{9}$ &$\sigma_{8}$ &$\sigma_{9}$ &$\sigma_{6}$ &$\sigma_{7}$   \\  

 $\sigma_{5}$ & $\sigma_{4}$ & $\sigma_{8}$ &$\sigma_{9}$ &$\sigma_{4}$ &$\sigma_{5}$ &$\sigma_{6}$ &$\sigma_{7}$ &$\sigma_{8}$ &$\sigma_{9}$    \\  

 $\sigma_{6}$ & $\sigma_{7}$ & $\sigma_{6}$ &$\sigma_{7}$ &$\sigma_{8}$ &$\sigma_{9}$ &$\sigma_{8}$ &$\sigma_{9}$ &$\sigma_{6}$ &$\sigma_{7}$   \\  

 $\sigma_{7}$ & $\sigma_{6}$ & $\sigma_{8}$ &$\sigma_{9}$ &$\sigma_{6}$ &$\sigma_{7}$ &$\sigma_{6}$ &$\sigma_{7}$ &$\sigma_{8}$ &$\sigma_{9}$   \\  

 $\sigma_{8}$ & $\sigma_{9}$ & $\sigma_{8}$ &$\sigma_{9}$ &$\sigma_{6}$ &$\sigma_{7}$ &$\sigma_{6}$ &$\sigma_{7}$ &$\sigma_{8}$ &$\sigma_{9}$    \\  
 $\sigma_{9}$ & $\sigma_{8}$ & $\sigma_{6}$ &$\sigma_{7}$ &$\sigma_{8}$ &$\sigma_{9}$ &$\sigma_{8}$ &$\sigma_{9}$ &$\sigma_{6}$ &$\sigma_{7}$      

\end{tabular}  $$
 
If  a space $X$ is  extremally disconnected, then   the closures of open sets are open, compare \cite[p. 452]{eng}.
It follows that $\sigma_6=\sigma_{12}$. The space  $X=\{a,b\}$  with the open sets $X$, $\emptyset$ and $\{a\}$ is extremally disconnected. But it contains an one-element open and dense set $\{ a \}$ and it follows that  $\sigma_7=\sigma_8$. Similar is for the space $\beta N$,  see \cite[p. 228 and 453]{eng}  to find the definition and properties of $\beta N$. There are Hausdorff  extremally disconnected   and dense in itself spaces. 
 For example, the Stone space of the complete Boolean algebra of all regular
closed subsets of the unit interval, compare \cite{gl}. For such spaces $\sigma_7 \not=\sigma_{8}$ and $\sigma_7=\sigma_{13}$. 
To see this, suppose a Hausdorff $X$ is  extremally disconnected and dense in itself. 
Let $X = U\cup V \cup W,$ where sets $U,V$ and $W$ are closed-and-open. Consider a set  $A= A^{c-c} \cup B \cup C$ such that:  
\begin{itemize}
	\item $C^{c-c} =\emptyset $ and  $ C^-=W$;
	\item $\emptyset \not= B \subseteq V$ and $B^{-c-c}= \emptyset$;
	 \item $U=A^{c-c-} \not=A^{c-c}$.
\end{itemize}
  Then check that:  
\begin{itemize}
	\item  $\sigma_0(A)=A$ and  
$\sigma_1(A)= X \setminus (A^{c-c} \cup  B \cup  C)$;
  \item $\sigma_2(A)=U \cup  B^- \cup W$ and 
   $\sigma_3(A)=X \setminus A^{c-c}$; 
   \item   $\sigma_4(A)=V \setminus   B^- $ and 
   $\sigma_5 (A) = A^{c-c}$;
   \item    $\sigma_6(A)=\sigma_{12}(A)=V$ and 
$\sigma_7(A)=\sigma_{13}(A)=U$; \item 
    $\sigma_8(A)=\sigma_{10}(A)=U \cup W$ and 
 $\sigma_9(A)=\sigma_{11}(A)=V \cup W$.
  \end{itemize} 
  Hence we have that $\sigma_7\not=\sigma_{8}$. Note that, if $W=\emptyset$, then $\sigma_7(A) =\sigma_{8}(A)$.  This is the case of subsets of $\beta N$.

\end{document}